\documentclass[11pt]{article}
\usepackage{graphicx} 
\usepackage{amsmath}
\usepackage{amssymb}
\usepackage{amsthm}
\usepackage{mathtools}
\usepackage{lipsum}
\usepackage{amsfonts}
\usepackage{mwe}
\usepackage{tikz}
\usepackage{url}
\usepackage{subcaption}
\usepackage{authblk}
\usepackage[normalem]{ulem}

\usepackage[numbers,sort&compress]{natbib}

\usetikzlibrary{matrix,positioning,fit,decorations.pathreplacing}

\usepackage[left=3cm, right=3cm]{geometry}
\newtheorem{definition}{Definition}

\newtheorem{lemma}{Lemma}
\newtheorem{theorem}{Theorem}
\newtheorem{remark}{Remark}

\newcommand{\Hi}{\ensuremath{H_{\mathrm{I}}}}
\newcommand{\Hp}{\ensuremath{H_{\mathrm{P}}}}
\newcommand{\Ai}{\ensuremath{A_{\mathrm{I}}}}
\newcommand{\An}{\ensuremath{A_{\mathrm{N}}}}
\newcommand{\Pa}{\ensuremath{P_{\mathrm{A}}}}
\renewcommand{\Pi}{\ensuremath{P_{\mathrm{I}}}}
\newcommand{\Dt}{\ensuremath{D_{t}}}
\newcommand{\Dtn}{\ensuremath{D_{t,\mathrm{N}}}}
\newcommand{\betai}{\ensuremath{\beta_{\mathrm{I}}}}
\newcommand{\betan}{\ensuremath{\beta_{\mathrm{N}}}}
\newcommand{\sym}{\ensuremath{\mathrm{sym}}}
\newcommand{\esym}{\varepsilon_{\sym}}
\DeclareMathOperator{\Tr}{\mathrm{Tr}}
\DeclarePairedDelimiterX{\norm}[1]{\lVert}{\rVert}{#1}
\DeclarePairedDelimiterX{\dotp}[2]{\langle}{\rangle}{#1,#2}
\def\eqdef{\overset{\text{def}}{=}}
\DeclareMathOperator{\Span}{Span}

\title{The High Cost of Data Augmentation \\ for Learning Equivariant Models}
\author[1]{Henri Klintebäck}
\author[1]{Christoph Ortner}
\author[1]{Lior Silberman}
\affil[1]{1984 Mathematics Rd, Vancouver, Department of Mathematics, University of British Columbia\\
\texttt{klintebaeck@math.ubc.ca}}

\date{\today}

\begin{document}

\maketitle

\begin{abstract}
According to Noether's theorem the presence of a continuous symmetry in a Hamiltonian systems is equivalent to the existence of a conserved quantity, yet these symmetries are not always explicitly enforced in data-driven models. There remains a debate whether or not encoding of symmetry into a model architecture is the optimal approach. A competing approach is to target approximate symmetry through data augmentation. In this work, we study two approaches aimed at improving the symmetry properties of such an approximation scheme: one based on a quadrature rule for the Haar measure on the compact Lie group encoding the continuous symmetry of interest and one based on a random sampling of that Haar measure.
We demonstrate both theoretically and empirically that the quadrature augmentation leads to exact symmetry preservation in polynomial models, while the random augmentation has only square-root convergence of the symmetrization error.
\end{abstract}

\section{Introduction}

The application of machine learning (ML) to the development of surrogate models of complex physical systems is an active area of research.
This is due to the intrinsic nature of physical systems which are often so complex that conventional surrogate models fail to describe them adequately. Prolific research areas for {\it machine learning surrogates} include quantum-accurate force fields for molecular dynamics \citep{PhysRevLett.98.146401}, 
coarse-grained molecular dynamics \citep{ML-CGMD}, and more generally of learning dynamical systems from data~\citep{YuOverview}. 

The focus of the present work is on the role of symmetries in machine learning model architectures. These can be found everywhere in the physical, chemical and biological world, be it in atomic potentials, molecular structures or protein interactions.
Two competing approaches to handling symmetry in machine learning models are (i) using a generic model that is unaware of symmetry but augmenting training data during learning \citep{pmlr-v162-brandstetter22a,MonteCarloRef}; (ii) exactly enforcing known symmetry in the model \citep{MACE, JMLR:Reynolds}. Each possibility can be achieved through a number of available techniques. It remains an ongoing debate (cf. foregoing references) which of these two approaches are preferable, and how they are to be optimally implemented. Generally speaking, striking a balance between enforcing structure in model-design and learning it from data remains a challenge in scientific machine learning. The purpose of our work is to rigorously analyze two natural data augmentation approaches to learn a known model symmetry and to contrast them against each other as well as against an analogous model with built-in symmetry.

An argument often advanced in favour of hard-coding symmetry into the model architecture is that it reduces the number of model parameters. This is generally true, but in the context of deep learning, the number of parameters is rarely an adequate measure of a model's performance or robustness. The properties of training dynamics is generally more important and some evidence is accumulating that symmetry-unconstrained model may indeed be easier to train and are equally robust and accurate out-of-distribution \citep{Smidt25,MCN25}. This has already lead to the development of highly efficient unconstrained models for static tasks such as geometric optimization \citep{Ceriotti26, CeriottiUniversal}

Our concern with preserving symmetry instead focuses on conserving important qualitative physics of a model. According to Noether's theorem, symmetries and conserved quantities are intrinsically linked; the presence of a continuous symmetry is equivalent to the existence of a first integral of the motion. For example, translation invariance results in conservation of linear momentum while rotation invariance results in conservation of angular momentum. For many applications, it is desired that these conserved quantities are preserved by the numerical scheme.
This led, for example, to the development of geometric integration, in particular symplectic integration \citep{HLW}. A symplectic integrator is an integrator such that its time-stepping operation is a symplectomorphism, a consequence being that any first integrals of the simulated system will be preserved \citep{Horikoshi} for a separable Hamiltonian.

In section \ref{section:AngMom}, we will show that this the impact of having only {\it approximate} symmetry in a surrogate model generically has a substantial impact on the conservation of the first integrals. We are particularly interested in the case of invariance under $\mathrm{SO}(n)$ whose associated conserved quantity in two and three dimensions is angular momentum. We review established theory explaining that --- except in the case of fully integrable systems --- one will observe an exponential drift in the time evolution of these conserved quantities. This motivates the need for surrogate models models to preserve conserved quantities exactly or to within very high precision. 

In section \ref{sec:DataAug} we then present and study two different methods of data augmentation in order to improve the symmetry properties of a symmetry-unconstrained approximation scheme. These are augmenting the data through a quadrature rule for the Haar measure on the compact Lie group whose representation is the continuous symmetry of interest and a random sampling of that Haar measure. We rigorously prove convergence (rates) for both of these schemes. Finally, in section \ref{sec:NumTest}, we provide a more empirical analysis to illustrate our theory and provide further details of the performance of data augmentation schemes approximation schemes through a Least-Squares toy model - we will be considering the simplest non-trivial case of three particles on $S^1$ and $S^2$.
In section \ref{sec:DataAug} we then present and study two different methods of data augmentation in order to improve the symmetry properties of a symmetry-unconstrained approximation scheme. These are augmenting the data through a quadrature rule for the Haar measure on the compact Lie group whose representation is the continuous symmetry of interest and a random sampling of that Haar measure. We rigorously prove convergence (rates) for both of these schemes. 

Finally, in section \ref{sec:NumTest}, we provide  empirical numerical tests to illustrate our rigorous theory and to provide further details of the performance of the data augmentation schemes.

\section{Approximate conservation of angular momentum}
\label{section:AngMom}
In this section, we review results concerning approximate conservation properties of symplectic integrators and their breakdown when a system is only approximately symmetric. $H=T+V$ denotes a separable Hamiltonian and $(p,q)$ the canonical coordinates in phase space. This section gives a formal discussion of key ideas; we refer to \citep{HLW} for technical details. 

A numerical integrator for a Hamiltonian system is symplectic if it is compatible with the manifold structure; cf. \citep{Arnold1989, HLW}.
The canonical example of a symplectic integrator is the order 2 Velocity Verlet integrator we chose for our numerical applications \citep{VelocityVerlet}. 
While the dynamics obtained through the integration scheme are not exact, they correspond to a Hamiltonian that is close to the original one, known as the shadow Hamiltonian $H_{\mathrm{eff}}$. For any symplectic integrator of order $p$ with stepsize $h$
%
\begin{equation}
    H_{\mathrm{eff}}=H+O(h^p). 
\end{equation}

One of $H_{\mathrm{eff}}$'s constructions may be found in \citep{Horikoshi}. It implies that any quantity that commutes with both $T$ and $V$ also commutes with $H_{\mathrm{eff}}$ as all its higher order terms are commutators of $T$ and $V$.  
In practice this is a strong result on the conservation of first integrals, as $J$ is a first integral iff $\{J,H\}=0$. Thus, $H$ and $H_{\mathrm{eff}}$ share the same conserved quantities.

We are interested in the case of a rotation-invariant potential $V$, resulting in conservation of angular momentum under Hamiltonian dynamics. More generally, we can consider continuous symmetries.

Suppose now, that during an approximation procedure (see later section) exact invariance is violated, but the resulting Hamiltonian remains approximately invariant, that is, $H=\Hi+\varepsilon \Hp$, where $\Hi$ has a continuous symmetry, $\varepsilon > 0$ and $\Hp$ is a Hamiltonian.  In this case, approximate conservation properties are not guaranteed. Two scenarios best illustrate the expected behavior: either trajectories stay controlled or the system contains some amount of chaos. The general setting belongs to the latter case; even  under analytic assumptions, we observe an exponential growth in the error. This is expected, as it is well-known that if a system is not fully integrable, then it is likely for chaos to be present \citep{Lyapunov1, Lyapunov2, LyapunovAS1,LyapunovAS2, LyapunovAS3}.

Although symplectic integrators do not alter this behaviour, they do guarantee that conserved quantities are preserved correctly. 
To observe this, we substitute $X_{\mathrm{V}} = X_{\mathrm{i}}+\epsilon X_\mathrm{p}$ into $X_{\mathrm{eff}}$'s expression, where $X_{\mathrm{i}}$ respectively $X_{\mathrm{p}}$ are the Liouville operators \citep{Horikoshi, Arnold1989} for $H_{\mathrm{i}}$ respectively $H_{\mathrm{p}}$. This yields

\begin{align}
    X_{\mathrm{eff}} &= X_{\mathrm{T}}+ X_{\mathrm{i}}+\epsilon X_{\mathrm{p}} + \frac{h}{2}[X_{\mathrm{T}},X_{\mathrm{i}}]+\frac{h\epsilon}{2}[X_{\mathrm{T}},X_{\mathrm{p}}]+\cdots\\
    J\circ \Phi_t&= e^{tX_{\mathrm{eff}}}J\\
    &= e^{t\epsilon X_{\mathrm{eff,p}}}e^{t(X_{\mathrm{T}}+X_{\mathrm{i}})}J\\
    &=e^{t\epsilon X_{\mathrm{eff,p}}}J,
\end{align}
where $\Phi_t$ is the Hamiltonian flow, that is the map such that $\Phi_t(p(0),q(0))=(p(t),q(t))$.
We remark that $e^{t\epsilon X_{\mathrm{eff,p}}}J=J$ iff $X_{\mathrm{p}}$ commutes with $J$.

While exponential deviation is the generic situation, more rigid behaviour appears in integrable systems, those with $n$ (the dimension of configuration space) functionally independent constants of the motion. KAM theory \citep{KAMRef} shows that in a completely integrable system, sufficiently irrational non-degenerate tori survive perturbation, implying that conserved quantities will be approximately conserved in the perturbed system.
This is shown in Chapter XI of \citep{HLW}.

We illustrate this behaviour through the following simulations. We consider 3 particles on the circle $S^1$ under a potential $U=F+\epsilon G$, where $F$ is rotation-invariant and analytic, while $G$ is chosen to be analytic but not invariant under rotations. Due to Noether's theorem, a continuous symmetry in the potential is equivalent to the presence of a first integral; rotation invariance corresponds to the conservation of angular momentum. The initial conditions are such that the initial total angular momentum vanishes. We perform the simulations for different values of $\epsilon$. 
To compare approximately symmetric potentials,  we define the symmetrization error.

\begin{definition}
    Let $G$ be a compact Lie Group, $H$ be the Haar measure on $G$. 
    Let $f$ be a function $f\colon V\to\mathbb{C}$, where $G$ is acting onto a vector space $V$. Let $\sym$ be the orthogonal projection, with respect to the $L^2$ inner product, onto the space of $G$ invariant functions. Then, we define the symmetrisation error to be
    \begin{equation}
        \epsilon_{\sym}(f)\eqdef||f-\sym(f)||_{L^2}.
    \end{equation}
\end{definition}

When $f$ is $G$-invariant $\epsilon_{\sym}(f)=0$. The choice of norm can in general have strong consequences. However, in our applications we consider only very smooth function classes such that the $L^2$ and $L^{\infty}$ norms remain comparable up to moderate constants.

\begin{figure*}[!htb]
    \centering
    \begin{subfigure}[t]{0.45\textwidth}
        \centering
        \includegraphics[width=\textwidth]{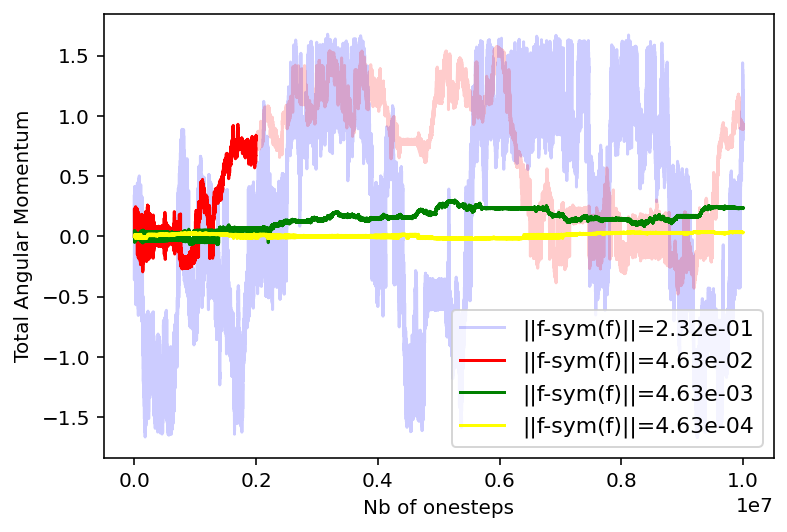} 
        \caption{The time evolution of the total angular momentum of a system of 3 particles on $S^1$ under the previously described potentials. Simulations are run for 10 million time steps.}
        \label{fig:ApproxJ2dA}
    \end{subfigure}%
    ~
    \begin{subfigure}[t]{0.45\textwidth}
        \centering
        \includegraphics[width=\textwidth]{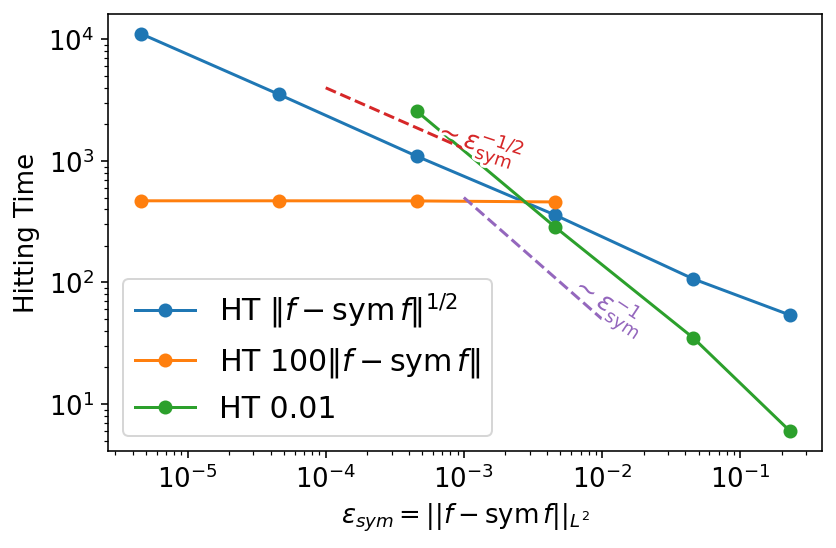} 
        \caption{The hitting times of different error targets $\varepsilon$. The hitting time corresponds to the first time step $t_n$, such that $|J(t_n)|\geq\varepsilon$}.
        \label{fig:ApproxJ2dB}
    \end{subfigure}
    \caption{All computations are performed with a Verlet integrator of step-size $dt = 0.05$.}
    \label{fig:ApproxJ2d}
\end{figure*}

Figure $\ref{fig:ApproxJ2d}$ illustrates the rapid breakdown of the conservation of angular momentum for approximately rotation-invariant potentials. Thus, if one wishes to preserve conserved quantities, either one should be working with a fully integrable system or have an approximation scheme that exactly preserves  symmetries. 

To conclude, we motivated that except in the very fringe case of full integrability, the Hamiltonian one gives to their numerical scheme should preserves symmetries as exactly as possible. Indeed, outside of KAM theory, only an exactly symmetric Hamiltonian will provably always have simulated dynamics whose outputs preserve the desired first integrals.

\subsection{Geometry Optimization}

Our foregoing discussion makes an argument that symmetry must be preserved to within very high precision in order to prevent loss of important conservation laws and the resulting (potentially catastrophic) failure of molecular dynamics simulations. Whether such failure occurs would depend on the specific simulation undertaken, e.g., the importance of conservation of angular momentum is different in a gas, liquid or solid. 

In addition, this does not prevent a non-symmetry preserving model from being well suited for other simulation tasks. For example, in ``geometry optimization'' (computation of energy minima, saddle points, etc)  we generally seek an equilibrium point of $V$, i.e., solve the nonlinear system 
\[
    \nabla V(\bar{q}) = 0.
\]
Suppose now that an approximate model $V_\epsilon$ exists such that $|V - V_\epsilon| \leq \epsilon^2$, and $\|\nabla V - \nabla V_\epsilon \|_2 = \epsilon$ in a neighbourhood of such an equilibrium point $\bar{q}$. Under suitable and natural additional assumptions on the stability of the equilibrium point, the inverse function theorem guarantees existence of an approximate equilibrium $\bar{q}_\epsilon$ such that 
\[
    \nabla V_\epsilon(\bar{q}_\epsilon) = 0, 
    \qquad 
    \| \bar{q} - \bar{q}_\epsilon \|_2 \leq C \epsilon 
    \quad \text{and} \quad 
    |V(\bar{q}) - V_\epsilon(\bar{q}_\epsilon)| \leq C \epsilon^2.    
\]
Obtaining this conclusion required no information about the symmetry of $V_\epsilon$. Thus, the requirement to exactly (or, to within high precision) preserve symmetries is necessarily highly task dependent. This is consistent with numerous empirical observations on non-trivial benchmark tasks~\citep{CeriottiUniversal}.

\section{Analysis of Data Augmentation}
\label{sec:DataAug}
We first summarize the notation used throughout the rest of the paper. $\mathbb{N}$ are the natural numbers including 0. $S^{d},d\in\mathbb{N}\setminus\{0\}$ is the $d$-sphere and $S^{N,d}$ its $N$-fold product. We denote by $L^2(S^{N,d})$ the space of complex-valued, measurable, square integrable functions. $R = (\mathbf{r}_j)_{j=1}^N\in S^{N,d}$ is a collection of points on the general $d$-sphere, for the special case of $d=1$, we denote by $\Theta=(\theta_1,\cdots,\theta_N)\in S^{N,1}$ a collection of angles representing points on the unit circle.  
The symbols $\mathbf{l},\mathbf{m},\mathbf{k}$ denote multi-indices, that is, tuples of integers, for instance $\mathbf{k}=(k_1,\cdots,k_N)\in\mathbb{Z}^N$. $Q\in \mathrm{SO}(d+1)$ is a rotation. The action of $Q$ on $\Theta$ or $R$ is written as $Q\cdot \Theta$ or $Q \cdot R$ and stands for element-wise rotation, i.e., 
\begin{equation}
    Q\cdot R = (Q\mathbf{r}_1,\cdots, Q\mathbf{r}_N).
    \label{eq:defAction}
\end{equation}

\subsection{Polynomial Regression}
We study different ways of approximating (or, ``learning'') a rotation-invariant potential $f\colon S^{N,d}\to\mathbb{C}$. 
We will focus on approximations using polynomial tensor product bases. Their elements are, 
\begin{align}
    \varphi_{\mathbf{k}}(\theta_1,\cdots, \theta_N)&=\prod_{j=1}^N e^{ik_j\theta_j}, \qquad \text{ for $d=1$}, \\
    \varphi_{\mathbf{l,m}}(\mathbf{r}_1,\cdots, \mathbf{r}_N)&=\prod_{i=1}^N Y^{m_i}_{l_i}(\mathbf{r}_i), \qquad \text{ for $d=2$},
    \label{eq:tensorbasis}
\end{align}
where the $Y^{m}_{l}$ are complex spherical harmonics: for $l\in\mathbb{N}$, $-l\leq m\leq l$, $(\theta,\varphi)\in S^2$, they are defined by
\begin{align*}
    Y^m_l(\theta,\varphi) &= (-1)^m\sqrt{\frac{(2l+1)(l-m)!}{4\pi(l+m)!}}P^m_l(\cos(\theta))e^{im\varphi}, \\ 
    P^m_l(x) &= \frac{(-1)^m}{2^l l!}(1-x^2)^{\frac{m}{2}}\frac{d^{l+m}}{dx^{l+m}}(x^2-1)^l.
\end{align*}
In practice, this definition is never used as we rely on numerically stable recursive evaluation algorithms~\cite{Scipy}. These functions are polynomials in the cartesian coordinates of $\mathbb{R}^{d+1}$. 
We denote by
\begin{equation}
    V^{N,d}_{K}\subset L^2(S^{N,d})
    \label{eq:defV}
\end{equation}
the subspace of polynomial functions (in cartesian coordinates) on $S^{N,d}$ of {\em total degree} at most $K$; specifically,  
\begin{align}
    V_K^{N,1} &:= \Span\left\{ \varphi_{\mathbf{k}}\;\colon \; \mathbf{k}\in \mathbb{Z}^N, \norm{\mathbf{k}}_1\leq K \right\}, \, \quad \text{and} \\
    V^{N,2}_{K} &:= \text{Span}\left\{\varphi_{\mathbf{l},\mathbf{m}}\colon {\bf l} \in \mathbb{N}^N, {\bf m} \in \mathbb{Z}^N, |m_i| \leq l_i, 
    \| {\bf l} \|_1 \leq K \right\}.
\end{align}
The total degree of $\varphi_{\bf l \bf m}$ is taken to be $\| {\bf l}\|_1$ to ensure that $V^{N,2}_{K}$ is closed under rotations (\citep{BookRepTh}; see also \S~\ref{subsec:32}).
It is well-known that $\bigcup_{K} V^{N,d}_K$ - the space of all polynomials - is dense in $L^2$~\citep{bump2004lie}.

For notational convenience we specify an arbitrary ordering, ${\bf k}_j$ or $({\bf l}_j, {\bf m}_j)$ of the basis elements and identifying $\varphi_j = \varphi_{{\bf k}_j}$ resp. $\varphi_j = \varphi_{{\bf l}_j {\bf m}_j}$. 
An arbitrary polynomial $P \in V_K^{N, d}$ can then be written in the form
\begin{equation}
    P_{\beta} = \sum_{j}\beta_{j}\varphi_{j},
\end{equation}
with parameters $\beta_{j}$. 

We employ a Least-Squares Approximation scheme (LSQ) to construct approximants. Let $(R_i, f(R_i))_{i=1}^n$ be iid data sampled with respect to some distribution (they are specified in Section \ref{sec:NumTest}). From it, we  construct the design matrix $A$, $A_{ij}=(\varphi_{j}(R_i))$, of degree $K$ and the data vector $Y$, with entries $Y_i = f(R_i)$.
The LSQ parameters are then determined by solving the linear least squares problem
\begin{equation}
\beta = \mathrm{argmin}_{\beta}L(\beta), 
\qquad \text{where} 
\qquad
L(\beta)=\frac{1}{2}\norm{A\beta - Y}^2.
\label{eq:fullLSQ}
\end{equation}
It is well known that under the assumption that there is enough data, the LSQ approximates the $L^2$ projection onto $V^{N,d}_K$ \citep{györfi2002}. However, no additional information is given about the symmetry of the estimated polynomial, and it is {\it a priori} unclear whether the deviation from exact invariance is such that it could lead to dramatic qualitative failures of a simulation (cf. Figure \ref{fig:ApproxJ2d}).

\subsection{Regression in Invariant Subspaces}
\label{subsec:32}
Arguably the canonical approach to learning an invariant function is restricting the approximation to the subspace of rotation-invariant polynomials on $S^{N,d}$ of total degree at most $K$: 
\begin{equation}
    B^{N,d}_K := \big\{P\in V^{N,d}_K\colon P\circ Q=P,\forall Q\in \mathrm{SO}(d+1) \big\} \subset V^{N,d}_K,
\end{equation}
where $V^{N,d}_K$, defined in \eqref{eq:defV}, is the space of polynomials of total degree at most $K$ on $S^{N,d}$ and the rotations have elementwise \eqref{eq:defAction} action.

\begin{lemma}
\label{lemma:1}
    Let $H$ be the normalised Haar measure on $G=\text{SO}(d+1)$ and let $f\in L^2(S^{N,d})$. Then,
    \begin{equation}
        \sym(f) = \int_G f\circ QH(dQ).
        \label{eq:symint}
    \end{equation}
    Furthermore, the symmetrisation error $\esym(f)$ satisfies
    \begin{equation*}
        \esym(f) \eqdef \norm{f-\sym(f)}_{L^2}\leq \norm{f-\bar{f}}_{L^2}
    \end{equation*}
    for any $G$-invariant function $\bar{f}$.
\end{lemma}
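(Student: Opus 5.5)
The plan is to show that the averaging operator $\Pa f \eqdef \int_G f\circ Q\,H(dQ)$ is exactly the orthogonal projection onto the closed subspace of $G$-invariant functions in $L^2(S^{N,d})$. The second claim then follows from the best-approximation property of orthogonal projections.

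First, $\Pa$ is well defined and bounded on $L^2$. The measure $\mu$ on $S^{N,d}$ is the product of normalised surface measures, and it is invariant under the elementwise action \eqref{eq:defAction}. Hence $f\mapsto f\circ Q$ is unitary on $L^2$. By Minkowski's integral inequality (or Jensen/Fubini applied to $|\Pa f|^2$), $\norm{\Pa f}_{L^2}\le \norm{f}_{L^2}$. Joint measurability of $(Q,R)\mapsto f(Q\cdot R)$ follows because the action is continuous. Next, $\Pa f$ is $G$-invariant: for $Q_0\in G$, $(\Pa f)\circ Q_0 = \int_G f\circ (QQ_0)\,H(dQ) = \Pa f$ by right-invariance of Haar measure. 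Here we use that $G$ is compact, so $H$ is bi-invariant; the composition order $f\circ Q\circ Q_0$ corresponds to the product $QQ_0$. Conversely, if $f$ is invariant then $\Pa f = \int_G f\,H(dQ)= f$ because $H$ is normalised. So $\Pa$ is idempotent with range equal to the invariant subspace $\mathcal{I}$. In particular $\mathcal{I}$ is closed, being the range of a bounded idempotent, or equivalently the intersection of the closed kernels of $f\mapsto f\circ Q - f$.

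Then I would show that $\Pa$ is self-adjoint. For $f,g\in L^2$, Fubini gives $\dotp{\Pa f}{g} = \int_G \dotp{f\circ Q}{g}\,H(dQ)$. By measure invariance this equals $\int_G \dotp{f}{g\circ Q^{-1}}\,H(dQ)$. Inversion invariance of $H$, which holds because $G$ is compact and hence unimodular, turns this into $\dotp{f}{\Pa g}$. A bounded self-adjoint idempotent is an orthogonal projection, and its range is $\mathcal{I}$, so $\sym = \Pa$, which is \eqref{eq:symint}. An alternative route is to check directly that $f-\Pa f\perp \mathcal{I}$: for invariant $\bar f$, $\dotp{\Pa f}{\bar f}=\int_G\dotp{f\circ Q}{\bar f}\,H(dQ)=\int_G\dotp{f}{\bar f\circ Q^{-1}}\,H(dQ)=\dotp{f}{\bar f}$.

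Finally, for any invariant $\bar f$, the difference $\sym(f)-\bar f$ lies in $\mathcal{I}$ and $f-\sym(f)\perp\mathcal{I}$. Pythagoras then gives $\norm{f-\bar f}^2=\norm{f-\sym f}^2+\norm{\sym f-\bar f}^2\ge \esym(f)^2$. The only delicate point I expect is the measure-theoretic bookkeeping: invariance of the product measure under $\mathrm{SO}(d+1)$, and applying Fubini to justify interchanging the Bochner/pointwise integral with the inner product. Both are standard once measurability of the action is noted.
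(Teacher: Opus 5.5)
Your proposal is correct and follows essentially the same argument as the paper. Both show that the Haar average is $G$-invariant and fixes invariant functions, so it is an idempotent onto $L^2_G$. Both prove self-adjointness from invariance of the measure on $S^{N,d}$ together with inversion invariance of $H$, conclude the average equals $\sym$, and obtain the inequality from best approximation, with closedness of the invariant subspace coming from the intersection of the kernels of $f\mapsto f\circ Q - f$. Your extra bookkeeping (boundedness, joint measurability, Fubini, and the explicit Pythagoras step) fills in details the paper leaves implicit.
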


The proof of the Lemma is standard; we include it for the reader's convenience.

\begin{proof}
    For an $L^2$ function $f$, we define $\pi(f)$ to be its group  average in \eqref{eq:symint}. We will show that it is the $L^2$ orthogonal projection into $L^2_G$ - the space of rotation-invariant $L^2$ functions. First remark that $\pi(f)$ is rotation invariant, as composing with a rotation will simply shift the integral: let $Q'\in G$, then
    \begin{equation}
        \pi(f)\circ Q'=\int_G f\circ (QQ')H(dQ) = \pi(f).
    \end{equation}
    Thus, $\mathrm{Im}(\pi)\subset L^2_G$.
    Changing variables it is straightforward to see that $\pi(\pi(f))=\pi(f)$ and that $\pi(\bar{f})=\bar{f}$ if $\bar{f}$ is rotation-invariant, in other words that $\pi$ is a projection onto the space of invariant functions.  We verify that it is self-adjoint, which would allow us to conclude that it is an orthogonal projector. For any function $g\in L^2$,
    \begin{align}
        \dotp*{\pi(f)}{g} & = \dotp*{\int_G  (f\circ Q)H(dQ)}{g} \\
        & = \int_G \dotp*{f\circ Q}{g}H(dQ) \\
        & = \int_G \dotp*{f}{g\circ Q^{-1}} H(dQ)\\
        & = \int_G \dotp*{f}{g\circ Q}H(dQ) \\
        & = \dotp*{f}{\pi(g)}\,.
    \end{align}
    Thus, $\pi\colon L^2\to L^2_G$ is the $L^2$-orthogonal projector and we conclude that $\pi=\sym$.
    The inequality then follows since $\sym(f)=\mathrm{argmin}_{\bar{f}\in L^2_G}(\norm*{f-\bar{f}})$.
    This is well defined as we claim that $L^2_G$ is closed. Indeed, let $\rho\colon G\to U(L^2)$ be our unitary representation - that is the above composition of a function with a rotation (cf. \eqref{eq:defAction}). Then, 
    \begin{equation}
        L^2_G = \{f\in L^2\colon \rho(Q) f= f, \forall Q\in G\} = \bigcap_{Q\in G}\ker(\rho(Q)-\mathrm{Id}).
    \end{equation}
    The conclusion follows as an arbitrary intersection of closed sets is closed.
\end{proof}

Using Lemma \ref{lemma:1} we can characterise the rotation-invariant subspaces. For $d=1$,
\begin{align}
        B_K^{N,1} & = \mathrm{Span}\left\{\varphi_{\mathbf{k}} \;\colon\; \varphi_{\mathbf{k}}\in V^{N,1}_K,\sum_{i=1}^N k_i=0\right\}.
\end{align}
This can be shown by applying $\sym$ to the basis elements of $V^{N,1}_K$.
\begin{align}
    \notag 
    \sym(\varphi_{\mathbf{k}})
    &= 
    \frac{1}{2\pi}\int_{0}^{2\pi}\prod_{j=1}^Ne^{ik_j(\theta_j+\alpha)}d\alpha \\ 
    &=\frac{1}{2\pi}\int_{0}^{2\pi}e^{i\alpha\sum_j k_j}\prod_{j=1}^Ne^{ik_j\theta_j}d\alpha 
    = 
    \begin{cases}
        \varphi_{\bf k}, & \sum_i k_i = 0, \\ 
        0, & \text{otherwise.}
    \end{cases}.
\end{align}

There are multiple natural choices of a basis for the rotation-invariant subspace of $V^{N,2}_K$. Possible approaches include applying \eqref{eq:symint} directly, or to recursively decompose tensor products of pairs of irreducible representations using Clebsch--Gordan coefficients. Any approach results in invariant basis functions
\begin{equation}
    \varphi_{\mathbf{l},\mu}=\sum_{\mathbf{m\in\mathcal{M}_{\mathbf{l}}}}\mathcal{C}^{\mathbf{l}\mu}_{\mathbf{m}}\varphi_{\mathbf{l}\mathbf{m}},
\end{equation}
where $\mu$ encodes all the possible invariant couplings and $\mathcal{C}^{\mathbf{l}\mu}_{\mathbf{m}}$ are generalized (higher-order) Clebsch-Gordan coefficients.
The details can be found in \citep{BookRepTh} and \citep{ACE}.  

All our numerical tests are performed with $N=3$ particles, in which case a basis of $B^{3,2}_L$ is given by 
\begin{align}
    \tilde{\varphi}_{\mathbf{l}}(R) = \sum_{\substack{\boldsymbol{\mu}\in\mathcal{M}_{\mathbf{l}}\\ \mu_1+\mu_2+\mu_3=0}} C^{l_3,0}_{l_1,0,l_2,0}C^{l_3,\mu_1+\mu_2}_{l_1,\mu_1,l_2,\mu_2}C^{00}_{l_3,0,l_3,0} C^{00}_{l_3,-\mu_3,l_3,\mu_3}\prod_{i=1}^3 Y^{m_i}_{l_i}(\mathbf{r}_i), & \\ 
    \text{where } \quad |l_1-l_2|\leq l_3 \leq l_1+l_2.&
    \label{eq:3dIbasis}
\end{align}

We can now define the invariant design matrix, $(\Ai)_{ij}=\tilde{\varphi}_{j}(x_i)$ where we have similarly specified an ordering of the basis elements of $B^{N,d}_K$. The invariant regression problem is 
\begin{equation}
    \bar\betai = \mathrm{argmin}_{\betai} L_{\mathrm{I}}(\betai) 
    \quad \text{where} \quad 
    L_{\mathrm{I}}(\betai)=\frac{1}{2}\norm{\Ai\betai -Y}^2 \; .
    \label{eq:invlsq}
\end{equation}

This idea of enforcing structure in the architectures is widespread in the ML community. For instance, Physics Informed Neural Networks \citep{PINNS} are neural networks that force their outputs to be solutions to the equations they are trying to learn. In the case of symmetries, enforcing such a constraint in a ML model leads to the so called Equivariant Neural Networks (ENNs) \citep{ENN}. These are widely used to study atomic potentials \citep{MACE, ACE} or other domains where symmetries are widespread such as lattice gauge theory \citep{Favoni_2022}. Applications are not solely restricted to physics, for example, ENNs are also used in image processing \citep{bengtssonbernander2024} as two images that differ by some rigid body transformation are usually thought to be the same.

\subsection{Data Augmentation}
Enforcing symmetry will always result in exact symmetry of the learned model. Interestingly, however, it may not always be the optimal choice. In fact, recent studies have shown that enforcing symmetry may hurt performance \citep{bökman23} or make training harder \citep{MCN25}. To avoid these possible issues, one may want to chose another way of learning symmetry, for instance through data augmentation.
For $T\in\mathbb{N}$, let $(w_t, Q_t)_{t=1}^T$ be a collection of real weights and rotations. We will specify below how they are obtained. Then the symmetry-augmented linear least squares problem is
\begin{equation}
    \beta_{\rm aug} = \arg\min L_{\rm aug}(\beta) 
    \qquad \text{where} \qquad 
     L_{\mathrm{aug}}(\beta) =\frac{1}{2}  \sum_{t=1}^Tw_t \big\|(A\circ Q_t)\beta-Y \big\|^2,
     \label{eq:lsqaug}
\end{equation}
where $A\circ Q_t$ denotes the design matrix evaluated at the points $Q_t\cdot R$, $t = 1, \dots, n$. The notation $A\circ Q$ comes from the fact that we may think of the design matrix as a function from data space to the space of matrices. The design matrix obtained from rotating data by $Q_t$ then corresponds to a composition with $Q_t$. Remark that $Y$ does not need to be rotated as the target-function is chosen to be rotation-invariant. 

This idea stems from the above observation that for a rotation-invariant function $f$, its value at one point $R$ is constant on the whole orbit $G\cdot R$. 
We present two ways of augmenting data: a random and an optimal sampling of $G$, leading to a near square root decay in $\esym$ respectively exact invariance. An important observation is that this type of augmentation does not correspond to a simple symmetrisation of the solution. The dependence of the output of our augmented LSQ system (cf \eqref{eq:lsqaug}) is nonlinear and the effects of augmentation must carefully analysed.

It is well known that $V^{N,d}_K$ is closed under rotations. That is, for any $v\in V^{N,d}_K$ and $Q\in \mathrm{SO}(d+1)$, $v\circ Q\in V^{N,d}_K$.
Moreover, this action is linear on the space of polynomials. They are related with each other through the (unitary) Wigner $D$-matrices \citep{BookRepTh} denoted by $D$. 
If we consider the spherical harmonic $Y^m_l$, then its rotation by $Q$ may be written as
\begin{equation}
    Y^m_l\circ Q = \sum_{m'=-l}^lY^{m'}_l D^l_{m'm}(Q).
\end{equation}
\begin{lemma}
    For any $Q\in\mathrm{SO}(d+1)$, there exists a unitary matrix $\mathcal{D}(Q)$ such that
\begin{equation}
    A\circ Q = A\mathcal{D}(Q).
\end{equation}
    \label{lemma:2}
\end{lemma}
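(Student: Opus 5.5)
The plan is to work row by row and reduce the statement to the rotation of a single basis function. Row $i$ of $A\circ Q$ is $(\varphi_j(Q\cdot R_i))_j$. It therefore suffices to show two things. First, each basis function transforms as a fixed linear combination of basis functions,
\[
\varphi_j\circ Q = \sum_{j'} \varphi_{j'}\,\mathcal{D}(Q)_{j'j},
\]
with coefficients independent of the data point. Second, the resulting matrix $\mathcal{D}(Q)$ is unitary. The identity $A\circ Q = A\mathcal{D}(Q)$ then follows by evaluating the expansion at each $R_i$: the entry $(i,j)$ of $A\mathcal{D}(Q)$ is $\sum_{j'}\varphi_{j'}(R_i)\mathcal{D}(Q)_{j'j}$, which equals $\varphi_j(Q\cdot R_i)$.

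For the expansion I will treat the two dimensions separately.

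\textbf{Case $d=1$.} Identify $Q$ with an angle $\alpha$. Then
\[
\varphi_{\mathbf{k}}\circ Q = e^{i\alpha\sum_j k_j}\varphi_{\mathbf{k}}.
\]
So $\mathcal{D}(Q)$ is diagonal with unimodular entries, and is therefore unitary.

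\textbf{Case $d=2$.} The action \eqref{eq:defAction} is elementwise, so the tensor product structure gives
\[
\varphi_{\mathbf{l},\mathbf{m}}\circ Q=\prod_{i}Y^{m_i}_{l_i}\circ Q=\sum_{\mathbf{m}'}\varphi_{\mathbf{l},\mathbf{m}'}\prod_i D^{l_i}_{m_i'm_i}(Q).
\]
Hence $\mathcal{D}(Q)$ is block diagonal. There is one block for each $\mathbf{l}$ with $\|\mathbf{l}\|_1\le K$, and that block equals the Kronecker product $D^{l_1}(Q)\otimes\cdots\otimes D^{l_N}(Q)$. Each $\mathbf{l}$-block stays inside $V^{N,2}_K$, because the total degree $\|\mathbf{l}\|_1$ is unchanged under rotation. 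Kronecker products of unitary matrices are unitary, and so are block diagonal matrices with unitary blocks. The ordering of basis elements only conjugates $\mathcal{D}(Q)$ by a permutation matrix, which preserves unitarity.

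The main point needing care is the unitarity of the Wigner matrices $D^l(Q)$, which the paper cites from \citep{BookRepTh}. For a self-contained argument I would argue as follows. The functions $\{Y^m_l\}_m$ are orthonormal in $L^2(S^2)$, and $f\mapsto f\circ Q$ is an $L^2$ isometry because the surface measure is rotation invariant. A linear map that sends one orthonormal basis of a finite-dimensional subspace to another has a unitary coefficient matrix.

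This argument also shows that the statement really concerns the unitary representation $\rho$ from the proof of Lemma~\ref{lemma:1}, restricted to the finite-dimensional invariant subspace $V^{N,d}_K$ and written in an orthonormal basis. The tensor basis $\{\varphi_j\}$ is orthonormal in $L^2(S^{N,d})$ (up to the normalisation $1/\sqrt{2\pi}$ per factor when $d=1$, which does not affect unitarity since $\mathcal{D}(Q)$ is diagonal there). Consequently, $\mathcal{D}(Q)$ is unitary in general, not only for $d=1,2$.
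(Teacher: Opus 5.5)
Your proposal is correct and follows the same route as the paper: you reduce to how each basis function transforms under $Q$ (for $d=2$, $\mathcal{D}(Q)$ is built from tensor products of Wigner matrices) and then stack the rows of the design matrix. You justify unitarity through the $L^2$-isometry of $f\mapsto f\circ Q$ and the $L^2$-orthonormality of the tensor basis, which is sharper than the paper's appeal to ``finite-dimensional representations of compact groups are unitary'': that fact alone only gives unitarity with respect to \emph{some} invariant inner product, not that the matrix is unitary in the given basis.
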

\begin{proof}
    We can generalise the notion of Wigner D-matrices to our polynomial tensor basis, as the tensor product of representations is a representation. Moreover, these matrices stay unitary as the finite dimensional representations of compact Lie groups are unitary \citep{bump2004lie}. Thus, we know the existence of $\mathcal{D}(Q)$ given $Q\in \mathrm{SO}(d+1)$ such that for all $v\in V^{N,d}_K$, $v\circ Q = v\mathcal{D}(Q)$. Since a design matrix is a concatenation of such vectors, the above $\mathcal{D}$ satisfies the desired properties.
\end{proof}
These matrices $\mathcal{D}$ can be thought of as generalised $D$-matrices. We refer to \citep{BookRepTh,ACE} for details, but we include an explicit computation for the case of particles on the circle ($d=1$) for the sake of illustration and intuition. We recall that a basis element $\varphi_{\mathbf{k}}$ is given by a product of trigonometric polynomials \eqref{eq:tensorbasis}. Acting on it by a rotation $Q_{\alpha}$ of angle $\alpha$, we obtain
\begin{align}
    Q_{\alpha}\cdot \varphi_{\mathbf{k}}(\Theta)&=\prod_{j=1}^N e^{ik_j(\theta_j+\alpha)}\\
    &=e^{i\alpha\sum_jk_j}\phi_{\mathbf{k}}(\Theta).
\end{align}
Thus, after reordering the basis to achieve the decomposition $V^{N,1}_K=B^{N,1}_K\oplus (B^{N,1}_K)^{\perp}$, $\mathcal{D}(Q_{\alpha})$ has block diagonal form:
\begin{equation}
    \mathcal{D}(Q_{\alpha})=\begin{pmatrix}
        \mathrm{Id}& 0\\
        0 & \mathrm{D}_{\alpha, \mathrm{N}}
    \end{pmatrix},
    \label{eq:DQd=1}
\end{equation}
where $\mathrm{D}_{\alpha, \mathrm{N}}$ is a diagonal matrix with entries $e^{i\alpha\sum_jk_j}$ on the row corresponding to $\varphi_{\mathbf{k}}$.

For notational simplicity we introduce $\Dt := \mathcal{D}(Q_t)$, then  $\nabla_{\beta}L_{\mathrm{aug}}(\beta)$ can be readily expressed as 
\begin{align}
    \nabla_{\beta}L_{\mathrm{aug}}(\beta)
    &=\sum_t w_t\Dt^*(A^*A\Dt\beta-A^*Y),
\end{align}
%
%
resulting in the normal equations 
\begin{align}
    \label{eq:normaleqns}
    \bigg(\sum_t w_t\Dt^* A^*A\Dt \bigg) \beta_{\rm aug} =\sum_t w_t\Dt A^*Y.
\end{align}

To understand the effect of data augmentation we need to analyze \eqref{eq:normaleqns}, a task make non-trivial by the fact that the $\Dt$ matrices occur nonlinearly. We begin by decomposing $V^{N,d}_K=B^{N,d}_K\oplus (B^{N,d}_K)^{\perp}$ as indicated above for $d = 1$. We can select a new orthonormal basis that is compatible with this decomposition. With abuse of notation, this results in new design matrices $A$, parameters $\beta$ and $D$-matrices $\Dt$ that have block structure with ${\rm I, N}$, respectively, denoting the invariant and non-invariant parts: 
\begin{equation}
    A=\begin{pmatrix}
        \Ai&\An
    \end{pmatrix},\; 
    A^*=\begin{pmatrix}
            \Ai^*\\
            \An^*
    \end{pmatrix},\; 
    \Dt=\begin{pmatrix}
           \mathrm{Id}&0\\
            0& \Dtn
    \end{pmatrix},\; 
    \beta = \begin{pmatrix}
            \betai\\ \betan
    \end{pmatrix}.
\end{equation}
In this decomposition, the shape of $\Dt$ follows as rotations act trivially on $B^{N,d}_K$ which implies that $(B^{N,d}_K)^{\perp}$ must also be closed under this action. Indeed, acting by a rotation and its inverse $QQ^{-1}$ should be the trivial action.
\begin{remark}
    We can see that
    \begin{equation}
        \esym(\beta)=||\betan||_2 .
    \end{equation}
    \label{remark:esymdec}
\end{remark}
For a given $t$, we obtain 
\begin{align}
    \Dt^* A^*A\Dt &= \begin{pmatrix}
        \Ai^*\Ai&\Ai^*\An \Dtn\label{Eq:Augdm}\\
        \Dtn^*\An^*\Ai&\Dtn^*\An^*\An \Dtn
    \end{pmatrix}.
\end{align}
Lastly, using the fact that the weights of a quadrature rule sum to the measure of the integration space we obtain
\begin{equation}
    \sum_t w_t\Dt^* A^*A\Dt = \begin{pmatrix}
        \Ai^*\Ai& \sum_i w_t \Ai^*\An \Dtn\\
        \sum w_t\Dtn^*\An^*\Ai&\sum_i w_t\Dtn^*\An^*\An \Dtn
    \end{pmatrix}.
    \label{eq:EzAug}
\end{equation}

In the following computations, we will write \eqref{eq:EzAug} in block form
\begin{equation}
    \begin{pmatrix}
        B&C\\
        C^* & D
    \end{pmatrix},
\end{equation}
and assume that the original, un-augmented, design matrix $A$ has full column rank. In our numerical applications, $A$ being of full rank is equivalent to having more data points than parameters.
\begin{lemma}
    If $A$ has full rank then so do $\Ai$ and $\An$.
    \label{lemma:3}
\end{lemma}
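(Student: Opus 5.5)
The plan is to read ``full rank'' as full column rank, which is the assumption stated just before the lemma. The key observation is that the columns of $\Ai$ and $\An$ are simply the columns of $A$, split into two groups, once we pass to the adapted orthonormal basis of $V^{N,d}_K=B^{N,d}_K\oplus(B^{N,d}_K)^{\perp}$. So the whole argument reduces to two facts: full column rank is invariant under the change of basis, and it is inherited by any subset of columns.

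\textbf{Step 1: change of basis.} Let $A_0$ be the design matrix in the original tensor basis. The new design matrix is $A=A_0 U$, where $U$ is the invertible (unitary) change-of-basis matrix expressing the new orthonormal basis in terms of the old one. Right multiplication by an invertible matrix preserves the kernel dimension, so $\ker A = U^{-1}\ker A_0=\{0\}$, and $A=\begin{pmatrix}\Ai&\An\end{pmatrix}$ has full column rank. The same reasoning applies if the invariant block is instead built from the basis $\tilde\varphi_{\mathbf l}$ in \eqref{eq:3dIbasis}. The only requirement is that the concatenated family is a basis of $V^{N,d}_K$, which holds because $B^{N,d}_K\oplus(B^{N,d}_K)^{\perp}=V^{N,d}_K$.

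\textbf{Step 2: restriction to blocks.} Suppose $\Ai x=0$ for some vector $x$. Then
\begin{equation*}
A\begin{pmatrix}x\\0\end{pmatrix}=\Ai x=0,
\end{equation*}
so $x=0$ by injectivity of $A$. Hence $\Ai$ has trivial kernel, that is, full column rank. The identical argument with $\begin{pmatrix}0\\ y\end{pmatrix}$ shows that $\An$ has full column rank.

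\textbf{Consequence and main obstacle.} It follows that $B=\Ai^*\Ai$ and $\An^*\An$ are positive definite, which is presumably how the lemma will be used later in the analysis of \eqref{eq:normaleqns}. The argument itself is elementary. The only point needing care is Step 1: one must make explicit that $\Ai$ and $\An$ arise from $A$ by an invertible column transformation, and one must interpret ``full rank'' as full column rank. For example, with $\Ai$ having fewer columns than rows, full row rank would generally fail.
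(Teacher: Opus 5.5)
Your proof is correct and complete. Writing the adapted design matrix as $A=A_0U$ with $U$ invertible gives $\ker A=\{0\}$, and any column block of an injective matrix is injective, so both $\Ai$ and $\An$ have full column rank. The paper states this lemma without any proof and later uses it, cited as Lemma~\ref{lemma:1}, to get invertibility of $B=\Ai^*\Ai$ in Lemma~\ref{lemma:SchurComplement}. Your two steps supply exactly the routine justification it omits, and your caveat that ``full rank'' must be read as full column rank is the right one.
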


This yields the existence of constants $c_0,c_1>0$ such that for any vector $u \in \mathbb{R}^{n_{\rm N}}$ ($n_{\mathrm{N}}$ being the number of columns of $\An$) and any rotation $Q$,
\begin{align}
    c_0\norm{u}&\leq \norm{\An u}\leq c_1\norm{u}  \qquad \text{and} \\
    c_0\norm{u}&\leq \norm{\An D(Q)u}\leq c_1\norm{u},
    \label{eq:LAbounds}
\end{align}
where the second line is a consequence of the unitarity of $D(Q)$. 

%

\begin{lemma}
    \label{lemma:SchurComplement}
    Let $\bar\betai$ denote the solution of the invariant LSQ problem \eqref{eq:invlsq}, then 
    \begin{equation}
        S \beta_{\rm N}
        = 
        \bar{D}_T^* A_{\rm N}^* \big(Y - A_{\rm I} \bar\betai\big),
    \end{equation}
    where $S = D-C^*B^{-1}C$ is the Schur complement \citep{zhangschur} of the block $\Ai^*\Ai$ \eqref{eq:EzAug} and $\overline{D}_T = \sum_t w_t \Dtn$.
    
    $S$ is symmetric, positive definite with spectrum bounded above and below in terms of the singular values of $A$, and therefore, 
    \begin{equation}
        c_2 \| \beta_{\rm N} \| 
        \leq 
        \big\| 
        \bar{D}_T^* A_{\rm N}^* \big(Y - A_{\rm I} \bar\betai\big)
        \big\| 
        \leq 
        c_3 \| \beta_{\rm N} \|,
        \label{eq:SchurSandwich}
    \end{equation}
    where $c_2, c_3$ depend only on the singular values of $A$.
\end{lemma}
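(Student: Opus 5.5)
The plan is to eliminate $\betai$ from the augmented normal equations \eqref{eq:normaleqns} by block Gaussian elimination, and then to bound the spectrum of the resulting Schur complement by the spectrum of the full augmented Gram matrix. Throughout I assume that the weights satisfy $w_t>0$ and $\sum_t w_t = 1$. This holds for the random scheme ($w_t = 1/T$) and for a positive quadrature rule for the normalised Haar measure, and it is already implicitly used in \eqref{eq:EzAug}. I also read the right-hand side of \eqref{eq:normaleqns} as $\sum_t w_t \Dt^* A^*Y$, which is what the gradient formula gives.

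\textbf{Step 1: elimination.} In the block decomposition, the right-hand side becomes $\big(\Ai^*Y,\; \bar D_T^* \An^* Y\big)$, using $\sum_t w_t = 1$ in the first block. The off-diagonal block is $C = \Ai^*\An \bar D_T$. The system therefore reads
\begin{align*}
B\betai + C\betan &= \Ai^* Y, \\
C^*\betai + D\betan &= \bar D_T^*\An^* Y .
\end{align*}
By Lemma~\ref{lemma:3}, $\Ai$ has full column rank, so $B = \Ai^*\Ai$ is invertible. Moreover $\bar\betai = B^{-1}\Ai^*Y$ is exactly the solution of \eqref{eq:invlsq}. The first equation then gives $\betai = \bar\betai - B^{-1}C\betan$. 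Substituting into the second equation yields
\[
S\betan = \bar D_T^*\An^*Y - C^*\bar\betai .
\]
Since $C^* = \bar D_T^* \An^*\Ai$, the right-hand side equals $\bar D_T^*\An^*(Y - \Ai\bar\betai)$, which is the claimed identity.

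\textbf{Step 2: spectrum of $S$.} Let $M = \sum_t w_t \Dt^*A^*A\Dt$, so that $M$ is the full block matrix in \eqref{eq:EzAug}. Each summand is Hermitian. By Lemma~\ref{lemma:2} and the unitarity of $\Dt$, each summand satisfies
\[
\sigma_{\min}(A)^2 \,\mathrm{Id} \preceq \Dt^*A^*A\Dt \preceq \sigma_{\max}(A)^2\,\mathrm{Id}.
\]
Averaging with positive weights summing to one preserves these bounds, so $\sigma_{\min}^2 \le \lambda(M) \le \sigma_{\max}^2$. The Schur complement $S$ of the positive definite Hermitian matrix $M$ is Hermitian, and it satisfies $S^{-1} = (M^{-1})_{\rm NN}$.

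For the lower bound, the eigenvalues of the compression $(M^{-1})_{\rm NN}$ are at most $\lambda_{\max}(M^{-1}) \le \sigma_{\min}^{-2}$. Hence $\lambda_{\min}(S) \ge \sigma_{\min}^2$.

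For the upper bound, $S = D - C^*B^{-1}C \preceq D$ because $C^*B^{-1}C \succeq 0$. Also $D = M_{\rm NN} \preceq \sigma_{\max}^2\,\mathrm{Id}$, so $\lambda_{\max}(S) \le \sigma_{\max}^2$. This also follows from \eqref{eq:LAbounds}.

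Taking norms in the identity from Step 1 then gives \eqref{eq:SchurSandwich} with $c_2 = \sigma_{\min}(A)^2$ and $c_3 = \sigma_{\max}(A)^2$.

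\textbf{Main obstacle.} The main point needing care is the lower spectral bound on $S$, which requires that $M$ be uniformly positive definite. The elimination itself is routine. The lower bound is exactly where the positivity and normalisation of the weights enter. If a quadrature rule with negative weights were allowed, one would instead need a separate argument that $M$ remains coercive. I would state this assumption explicitly rather than attempt to remove it.
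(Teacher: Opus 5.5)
Your proposal is correct and follows the paper's route. You eliminate $\betai$ using the invertibility of $B=\Ai^*\Ai$ and identify $B^{-1}\Ai^*Y$ with $\bar{\betai}$; along the way you also correct the paper's displayed term $C^*B^{-1}C\Ai^*Y$ (it should be $C^*B^{-1}\Ai^*Y$) and the missing adjoint on $\Dt$ in the right-hand side of the normal equations. The paper only asserts that positive definiteness and the two-sided bound are ``clear from $S$'s expression'', whereas you prove them via $\sigma_{\min}(A)^2\,\mathrm{Id}\preceq M\preceq\sigma_{\max}(A)^2\,\mathrm{Id}$, $S^{-1}=(M^{-1})_{\mathrm{NN}}$ and $S\preceq D$, and you rightly state the positive, normalised weights that this step implicitly requires.
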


\begin{proof}
   We write the of matrix in \eqref{eq:EzAug}. $B$ is invertible as $\Ai$ has full rank (cf. Lemma \ref{lemma:1}). Therefore, its Schur complement
    is invertible and we obtain the following expression for $\betan$
    \begin{align}
        S\betan &= \overline{D}_T^*\An^*Y-C^*B^{-1}C\Ai^*Y\\
        &= \overline{D}_T^*\An(Y-\Ai\Ai^{\dagger}Y).
    \end{align}
    The $\dagger$ symbol denotes the Moore-Penrose pseudoinverse. As $\Ai$ has full rank, $\Ai^{\dagger}Y$ is the solution to the invariant LSQ problem \eqref{eq:invlsq}. Lastly, from $S$'s expression, it is clear that it is positive definite and \eqref{eq:SchurSandwich} follows. 
\end{proof}

Combining Lemma \ref{lemma:SchurComplement} and Remark \ref{remark:esymdec} results in the following upper bound:
\begin{equation}
    \esym(\Pa)\leq \frac{1}{c_2} \norm{\An \overline{D}_T}_{\mathrm{op}}\norm{\Ai \bar\betai-Y},
    \label{eq:SchurUB}
\end{equation}
$\norm{\cdot}_{\mathrm{op}}$ being the operator norm induced by the 2-norm.

\subsubsection{Quadrature Data Augmentation}
From \eqref{eq:EzAug} we observe that if $(w_t,Q_t)_{t=1}^T$ is a quadrature rule for $\mathrm{SO}(d+1)$, then the augmented system becomes block diagonal as quadrature rules integrate polynomials of lower degrees exactly.
\begin{lemma}
    For a LSQ in $V^{N,d}_K$, the entries of the matrix $\mathcal{D}(Q)$ from lemma \ref{lemma:2} are degree $K$ polynomial in $Q$.
    \label{lemma:wignerpoly}
\end{lemma}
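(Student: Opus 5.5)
We prove that the matrix coefficients $\mathcal{D}(Q)_{ij}$ are polynomials of degree at most $K$ in the entries of $Q\in\mathrm{SO}(d+1)$, viewed as a $(d+1)\times(d+1)$ real matrix. The plan is to avoid Wigner $D$-matrices and work directly with the definition of $V^{N,d}_K$ as polynomials in Cartesian coordinates. By Lemma~\ref{lemma:2}, $\mathcal{D}(Q)$ is the matrix of the linear map $v\mapsto v\circ Q$ on $V^{N,d}_K$ in the chosen basis $(\varphi_j)$. Concretely,
\begin{equation*}
    \varphi_j\circ Q=\sum_i \mathcal{D}(Q)_{ij}\,\varphi_i .
\end{equation*}
So it suffices to show two things: that $\varphi_j\circ Q$ is a polynomial in $(Q,R)$ jointly, of degree at most $K$ in $Q$, and that extracting the coefficients in the basis $(\varphi_i)$ is a linear operation independent of $Q$.

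\textbf{Step 1 (degree in $Q$).} Represent each $\varphi_j$ by a polynomial $p_j$ in the Cartesian coordinates $(\mathbf{r}_1,\dots,\mathbf{r}_N)\in(\mathbb{R}^{d+1})^N$. For $d=1$, $e^{ik\theta}=(x+iy)^k$ or $(x-iy)^{|k|}$. For $d=2$, $Y^m_l$ is the restriction of a homogeneous harmonic polynomial of degree $l$. Thus $p_j$ is a sum of monomials of total degree at most $\|\mathbf{k}\|_1$ resp. $\|\mathbf{l}\|_1\le K$. Substituting $\mathbf{r}_i\mapsto Q\mathbf{r}_i$ replaces each coordinate by a linear form whose coefficients are entries of $Q$. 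A monomial of total degree $s$ in the $\mathbf{r}$'s therefore becomes a polynomial that is bihomogeneous of degree $s$ in $Q$ and $s$ in $R$. Hence $\varphi_j\circ Q$ is a polynomial in $R$ whose coefficients are polynomials in $Q$ of degree at most $K$.

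\textbf{Step 2 (coefficients are linear functionals).} By closedness of $V^{N,d}_K$ under rotations, $\varphi_j\circ Q\in V^{N,d}_K$. Since the $\varphi_i$ are $L^2$-orthonormal (tensor products of orthonormal Fourier modes or spherical harmonics, after normalisation), we can write
\begin{equation*}
    \mathcal{D}(Q)_{ij}=\dotp{\varphi_j\circ Q}{\varphi_i}_{L^2(S^{N,d})}=\int \varphi_j(Q\cdot R)\,\overline{\varphi_i(R)}\,dR .
\end{equation*}
Expanding $\varphi_j(Q\cdot R)=\sum_\alpha c_\alpha(Q)\,m_\alpha(R)$, with $c_\alpha$ polynomials of degree at most $K$ in $Q$ and $m_\alpha$ monomials, gives
\begin{equation*}
    \mathcal{D}(Q)_{ij}=\sum_\alpha c_\alpha(Q)\int m_\alpha\overline{\varphi_i}.
\end{equation*}
This is a finite linear combination with constant coefficients, hence a polynomial of degree at most $K$ in $Q$. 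If the paper's basis is not orthonormal, the same conclusion follows from the fact that coordinates in any fixed basis are fixed linear functionals.

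\textbf{Main obstacle.} The main subtlety is bookkeeping rather than depth: making precise that \emph{degree} refers to polynomials in the matrix entries of $Q$, and ensuring the degree bound uses $\|\mathbf{l}\|_1$, i.e. the Cartesian degree, rather than something larger. For $d=2$ this requires that $Y^m_l$ is a degree-$l$ polynomial in Cartesian coordinates, which is standard for harmonic polynomials. For general $d$ one uses the same fact for higher-dimensional spherical harmonics. Note also that, restricted to the group, polynomials are taken modulo the relations $Q^\top Q=I$. This is harmless, since that restriction can only lower the effective degree, which is what matters when we later apply a quadrature rule exact to degree $K$ (or $2K$ for products).
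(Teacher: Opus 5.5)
Your proof is correct, and it takes a more hands-on route than the paper. The paper proves the lemma in one line: it cites the standard fact (Goodman--Wallach) that $\mathcal{D}$ is a finite-dimensional representation of $\mathrm{SO}(d+1)$ and therefore has polynomial matrix coefficients. That fact alone gives polynomiality but leaves the degree bound $K$ implicit. To recover the bound in that framework, one would decompose $V^{N,d}_K$ into irreducibles via Clebsch--Gordan (the ``addition of angular momenta'' used in the paper's lower bound on quadrature size). One would then note that only highest weights $L\le\|\mathbf{l}\|_1\le K$ occur, and that the Wigner functions $D^L_{mm'}$ have degree $L$.

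You instead write each $\varphi_j$ as a Cartesian polynomial: a homogeneous harmonic of degree $l_i$ in each $\mathbf{r}_i$, resp.\ $(x_i\pm iy_i)^{|k_i|}$. You then substitute $\mathbf{r}_i\mapsto Q\mathbf{r}_i$ to get bihomogeneity in $(Q,R)$, and read off coordinates with $Q$-independent linear functionals. This is elementary and self-contained, and it makes the degree bound explicit. In the tensor basis it even holds column by column: column $j$ has degree at most $\|\mathbf{l}_j\|_1$, resp.\ $\|\mathbf{k}_j\|_1$. It is also basis-independent, so it covers the paper's change to an orthonormal basis adapted to $B^{N,d}_K\oplus(B^{N,d}_K)^\perp$.

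What the representation-theoretic phrasing buys is that it lands directly in the class on which $\mathrm{SO}(3)$ quadrature rules are certified in the cited literature, namely the span of $D^L_{mm'}$ with $L\le n$. Your notion of degree is measured in the matrix entries of $Q$. To connect the two, you need the standard identification that restrictions to $\mathrm{SO}(3)$ of polynomials of degree at most $n$ in the entries of $Q$ span exactly these Wigner functions, since products of $n$ entries are matrix coefficients of $(\mathbb{C}^3)^{\otimes n}$. Your closing remark only gestures at this.
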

This is a standard fact; the $\mathcal{D}$ are finite dimensional representations of $\mathrm{SO}(d+1)$ \citep{goodman2010symmetry}.

\begin{theorem}
    \label{thm:quadaugment}
    Suppose that $(w_t,Q_t)_{t}$ is a quadrature rule with degree of accuracy $n \geq K$, then the augmented LSQ (cf. \eqref{eq:lsqaug}) approximation on $V^{N,d}_K$ is exactly invariant. That is, let $\Pa$ be the resulting polynomial, then
    \begin{equation}
        \esym(\Pa)=0
    \end{equation}
\end{theorem}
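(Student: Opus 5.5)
The plan is to show that the off-diagonal blocks and the non-invariant right-hand side in the normal equations \eqref{eq:normaleqns} all vanish. Then $\betan$ solves a homogeneous system with an invertible matrix, so $\betan=0$. By Remark \ref{remark:esymdec} this gives $\esym(\Pa)=0$.

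\textbf{Step 1: the averaged non-invariant block is zero.} Write $\overline{D}_T=\sum_t w_t\Dtn$. By Lemma \ref{lemma:wignerpoly} the entries of $\mathcal{D}(Q)$ are polynomials in $Q$ of degree at most $K$. Hence so are the entries of its block $\mathcal{D}_{\mathrm N}(Q)$. Since the rule has degree of accuracy $n\ge K$,
\begin{equation*}
\overline{D}_T=\int_G \mathcal{D}_{\mathrm N}(Q)\,H(dQ).
\end{equation*}
This Haar integral is zero. The reason is that $P:=\int_G\mathcal{D}(Q)H(dQ)$ is, by left invariance of $H$, a matrix with $\mathcal{D}(Q')P=P$ for all $Q'$. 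By the computation in the proof of Lemma \ref{lemma:1}, $P$ is exactly the matrix of $\sym$ restricted to $V^{N,d}_K$, i.e.\ the orthogonal projection onto $B^{N,d}_K$. So its $\mathrm N$-block vanishes. Equivalently, the range of $\int_G \mathcal{D}_{\mathrm N}$ consists of invariant vectors in $(B^{N,d}_K)^\perp$, and the only such vector is $0$. The same argument with $Q^{-1}$, or simply taking the adjoint, gives $\sum_t w_t\Dtn^*=0$.

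\textbf{Step 2: block-diagonalise and conclude.} The off-diagonal blocks in \eqref{eq:EzAug} are linear in $\Dtn$:
\begin{equation*}
C=\Ai^*\An\overline{D}_T=0 .
\end{equation*}
The $\mathrm N$-component of the right-hand side is $\overline{D}_T^*\An^*Y=0$. I would phrase this through Lemma \ref{lemma:SchurComplement}: its right-hand side $\overline{D}_T^*\An^*(Y-\Ai\bar\betai)$ vanishes. Because $S$ is positive definite, this forces $\betan=0$.

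To be careful, I would still verify that $S$ is invertible in this case. Here $C=0$, so $S=D=\sum_t w_t\Dtn^*\An^*\An\Dtn$. Its entries are degree-$2K$ polynomials in $Q$, so it need not be integrated exactly, and the argument must not rely on that. Instead I would argue positivity directly. Assume the weights $w_t$ are positive, as for standard quadrature rules, and $\sum_t w_t=1$. Then \eqref{eq:LAbounds} gives
\begin{equation*}
u^*Du\ge c_0^2\|u\|^2 .
\end{equation*}
So $D$ is invertible. As a byproduct, the invariant block is then exactly $\bar\betai$.

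\textbf{Main obstacle.} The delicate step is Step 1, for two reasons. First, the degree count must match the definition of ``degree of accuracy''. The entries of $\mathcal{D}$ are polynomials of degree at most $K$ in the matrix entries of $Q$, so a rule exact up to degree $n\ge K$ suffices. Second, the Haar average of $\mathcal{D}_{\mathrm N}$ must be identified with zero, via the projection property from Lemma \ref{lemma:1}. A secondary point is keeping track of $\Dt$ versus $\Dt^*$ in \eqref{eq:normaleqns}; both average to zero, so this does not matter. Finally, the invertibility of $D$ needs positivity of the weights; if that is not part of the intended definition of quadrature rule, I would add it as an assumption.
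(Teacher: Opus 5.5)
Your proposal is correct and is essentially the paper's argument. The paper also uses the Schur-complement identity of Lemma~\ref{lemma:SchurComplement} (via the bound \eqref{eq:SchurUB}) to get $\betan=0$ once $\overline{D}_T=0$, and it obtains $\overline{D}_T=0$ from exactness of the quadrature rule on the degree-$K$ polynomial entries of $\mathcal{D}(Q)$ (Lemma~\ref{lemma:wignerpoly}); your extra steps fill in what the paper leaves implicit, namely identifying $\int_G\mathcal{D}(Q)\,H(dQ)$ with the matrix of $\sym$ so its $\mathrm{N}$-block vanishes, and observing that invertibility of $S=D$ needs positive weights summing to one, since its degree-$2K$ entries are not integrated exactly.
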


\begin{proof}
    From the upper bound obtained in \eqref{eq:SchurUB}, we see that $\overline{D}_T^*=0$ is a necessary condition for $\esym(\Pa)=0$. The conclusion follows from the properties of a quadrature rule and lemma \ref{lemma:wignerpoly}.
\end{proof}

\begin{remark}
    The upper bound in lemma \eqref{eq:SchurUB} we observe that there are no guarantees that $\esym$ improves  before the quadrature rule becomes of sufficiently high degree, it is especially clear in the case of $d=1$ \eqref{eq:DQd=1}. However, it is still possible to treat a insufficient quadrature augmentation as a particular case of a random augmentations, which we study in the next subsection.
    \label{remark:noimprovement}
\end{remark}

\begin{remark}
    The conditions of Theorem \ref{thm:quadaugment} will result in the same approximant as the invariant LSQ problem \eqref{eq:invlsq}. This may be deduced from the fact that the off-diagonal terms in \eqref{eq:EzAug} are integrated exactly to $0$ and that the lower right block is invertible (due to $\An$'s full rank and the unitarity of $\Dtn$).
\end{remark}

In order to benefit from the quadrature augmentation method one has to obtain the appropriate quadrature rules.  For $\mathrm{SO}(2)$, after identifying it with $[0,2\pi)$, we obtain that an equi-weight quadrature rule with degree of accuracy $n-1$ is given by $\big\{\frac{1}{n}, \frac{2\pi t}{n}\colon t\in[n-1]\big\}$ \citep{TrefethenWeideman}. For $\mathrm{SO}(3)$, quadrature rules have been developed in \citep{TUCQuad}, which we use for all our numerical tests. These rules are significantly more computationally expensive both to obtain and to use. 
\begin{table}[]
    \centering
    \begin{tabular}{|c|c|c|}
    \hline
        Degree of accuracy & Number of Points \\
        \hline
        2 & 11\\
        \hline
        4 & 43\\
        \hline
        7 & 168\\
        \hline
        9 & 300\\
        \hline
        14& 960\\
        \hline
        21 &12648 \\
        \hline
        51 & 207576\\
        \hline
    \end{tabular}
    \caption{Size of quadrature rule for $\mathrm{SO}(3)$}
    \label{tab:quadSO3}
\end{table}
We illustrate this with Table \ref{tab:quadSO3} where we provide the size of some quadrature rules \citep{TUCQuad, QuadsGraf}. The rapid increase in the size of quadrature rules cannot be changed.

\begin{lemma}
    For any quadrature rule of degree $n$ on $\mathrm{SO}(3)$ there is a constant $c_1$ such that it contains at least $c_1n^3$ points .
\end{lemma}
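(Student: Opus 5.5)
The plan is a dimension-counting argument: a quadrature rule on $G=\mathrm{SO}(3)$ with degree of accuracy $n$ must have at least as many nodes as the dimension of a suitable space of polynomials on $G$, and that dimension grows like $n^3$.

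Let $\mathcal{P}_m(G)$ denote the span of matrix coefficients $Q\mapsto D^l_{m'm}(Q)$ with $l\le m$. By Peter--Weyl, these coefficients are $L^2(G)$-orthogonal, and
\[
\dim\mathcal{P}_m(G)=\sum_{l=0}^{m}(2l+1)^2 ,
\]
which is of order $m^3$. We work with the standard convention, consistent with Lemma \ref{lemma:wignerpoly}, that a rule of degree $n$ integrates $\mathcal{P}_n(G)$ exactly.

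Set $m=\lfloor n/2\rfloor$. The key closure property is that for $p,q\in\mathcal{P}_m(G)$ the product $\bar p\,q$ lies in $\mathcal{P}_{2m}(G)\subset\mathcal{P}_n(G)$. This holds because the tensor product of representations $V_{l_1}\otimes V_{l_2}$ decomposes by Clebsch--Gordan into $V_l$ with $l\le l_1+l_2$. Complex conjugation preserves each $V_l$-block, since $\overline{D^l}$ is equivalent to $D^l$ for $\mathrm{SO}(3)$, so it also preserves $\mathcal{P}_m(G)$.

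Now suppose the rule $(w_t,Q_t)_{t=1}^T$ has $T<\dim\mathcal{P}_m(G)$. The evaluation map
\[
p\mapsto (p(Q_t))_{t=1}^T
\]
from $\mathcal{P}_m(G)$ to $\mathbb{C}^T$ then has a nontrivial kernel. Take a nonzero $p$ in that kernel. Exactness of the rule applied to $|p|^2\in\mathcal{P}_n(G)$ gives
\[
0<\int_G|p|^2\,H(dQ)=\sum_t w_t|p(Q_t)|^2=0,
\]
a contradiction. This step uses no positivity of the weights. Hence
\[
T\ge\dim\mathcal{P}_{\lfloor n/2\rfloor}(G)=\sum_{l\le \lfloor n/2\rfloor}(2l+1)^2\ge c_1n^3
\]
for $n\ge1$, for instance with $c_1=1/48$ after checking the small cases.

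The main obstacle is aligning the notion of ``degree'' with the paper's usage. If degree is instead measured as polynomial degree in the matrix entries of $Q\in\mathbb{R}^{3\times3}$, then one must show that this space coincides with $\mathcal{P}_n(G)$, or at least sits between $\mathcal{P}_{cn}$ and $\mathcal{P}_{Cn}$. One inclusion follows because the entries of $Q$ span $D^1$, so products of $k$ entries lie in $\mathcal{P}_k$. The reverse inclusion follows because $D^l$ appears in $(D^1)^{\otimes l}$, so its coefficients are polynomials of degree $l$ in the entries. Either way the $n^3$ lower bound survives up to constants, and these constants are absorbed into $c_1$.
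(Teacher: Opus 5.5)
Your proof is correct and is essentially the paper's argument. The paper notes that the exact integral and the $N$-point quadrature functional induce the same form on $V_{n/2}$, which has full rank for the integral and rank at most $N$ for the quadrature, and your nonzero polynomial in the kernel of the evaluation map is the contrapositive of that rank comparison (your version is a bit more careful, handling complex conjugation explicitly and giving a valid constant, since $\dim V_{n/2}$ is about $n^3/6$, not $>n^3$ as the paper's last line states).
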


\begin{proof}
    Let $V_n$ be the span of the representations of all matrix coefficients with highest weight at most $n$. By the addition theorem of angular momenta, if $f,g\in V_{n/2}$ then $fg\in V_n$.

    Now let $I$ be an integration functional, and $(\mathbf{P},w)$ be a $N$ point quadrature functional, we will name it $J$. Then suppose that they agree on $V_n$. In particular, they also agree on $V_{n/2}$, and since they are linear functionals, they induce an inner product on $V_{n/2}$ and they are of full rank. Moreover, by construction $J$ has rank at most $N$. And $N\geq V_{n/2}> n^3$. 
\end{proof}

\subsubsection{Random Data Augmentation}
The common way of approximating this symmetrisation process is by augmenting via randomly sampled rotations \citep{Augerino, Daoetal}. We show that this leads to a near $T^{-1/2}$ decay in $\esym$.

\begin{theorem}
    Let $A$ be the design matrix. Let $Q_1,Q_2,\cdots$ be iid uniformly drawn elements of $\mathrm{SO}(d+1)$. For fixed $T\in\mathbb{N}$, let $(T^{-1},Q_t)_{t=1}^T$ be augmentation parameters. Let $\Pa$ be the polynomial obtained from the augmented LSQ approximations (cf. \eqref{eq:lsqaug}). Then, for any $\epsilon>0$,
    \begin{equation}
        \frac{\esym(\Pa)\sqrt{T}}{\ln(T)^{1/2+\epsilon}}\xrightarrow[]{a.s.}0
    \end{equation}
    \label{thm:randaugment}
\end{theorem}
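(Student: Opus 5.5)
The plan is to reduce the statement to a law of the iterated logarithm type bound for the empirical average $\overline{D}_T=\frac1T\sum_{t=1}^T\Dtn$. First, the upper bound \eqref{eq:SchurUB} from Lemma \ref{lemma:SchurComplement} gives
\begin{equation*}
    \esym(\Pa)\leq \frac{1}{c_2}\norm{\An}_{\mathrm{op}}\norm{\overline{D}_T}_{\mathrm{op}}\norm{\Ai\bar\betai-Y}.
\end{equation*}
The factor $\norm{\Ai\bar\betai-Y}$ is the residual of the invariant LSQ problem \eqref{eq:invlsq}, so it does not depend on the augmentation at all. The factor $\norm{\An}_{\mathrm{op}}\le c_1$ by \eqref{eq:LAbounds}. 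I must also make sure that $c_2$ is uniform in $T$ and in the random draw. For this I would use the eigenvalue bounds on $S$ from Lemma \ref{lemma:SchurComplement}. These depend only on singular values of $A$, because the unitarity of $\Dtn$ gives $\sum_t w_t \Dtn^*\An^*\An\Dtn \succeq c_0^2\,\mathrm{Id}$ with weights summing to one. The Schur complement then inherits a uniform lower bound through the augmented matrix $\sum_t w_t \Dt^*A^*A\Dt$, which is bounded below by $c_0^2$. It therefore suffices to show that $\sqrt{T}\,\norm{\overline{D}_T}_{\mathrm{op}}/\ln(T)^{1/2+\epsilon}\to0$ almost surely.

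Second, I would analyse $\overline{D}_T$ entrywise. The matrices $\mathcal{D}_{t,\mathrm{N}}=\mathcal{D}_{\mathrm N}(Q_t)$ are iid, and their entries are bounded by $1$ in modulus by unitarity. Their mean is zero: $\int_G \mathcal{D}_{\mathrm N}(Q)H(dQ)$ is the projection onto the invariant vectors of the representation restricted to $(B^{N,d}_K)^\perp$, and that subspace contains no nonzero invariants. This is the Lemma \ref{lemma:1} argument applied to the finite-dimensional representation, or Schur orthogonality. So each entry $\frac1T\sum_t (\mathcal{D}_{\mathrm N}(Q_t))_{ij}$ is a normalised sum of iid, bounded, mean-zero complex random variables. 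The operator norm is bounded by the Frobenius norm, and the matrix has a fixed finite size $n_{\mathrm N}$ independent of $T$. Hence it suffices to prove the rate for each real and imaginary part of each entry, and then take a finite union of null sets.

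Third, the scalar estimate. Let $X_t$ be iid, bounded, mean zero and real, and set $S_T=\sum_{t\le T}X_t$. The Hartman--Wintner law of the iterated logarithm already gives $|S_T|=O(\sqrt{T\ln\ln T})$ almost surely, which is much stronger than needed, so $\sqrt{T}\cdot|S_T|/T\,/\ln(T)^{1/2+\epsilon}\to0$. If I want a self-contained argument instead, I would use Hoeffding's inequality:
\begin{equation*}
    \mathbb{P}\bigl(|S_T|> \sqrt{T}\ln(T)^{1/2+\epsilon}\delta\bigr)\le 2\exp\bigl(-c\,\delta^2\ln(T)^{1+2\epsilon}\bigr).
\end{equation*}
This is summable in $T$ for every $\delta>0$ because $\ln(T)^{1+2\epsilon}$ eventually dominates $2\ln T$. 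Borel--Cantelli then gives the almost sure convergence directly along all $T$, with no subsequence needed. The $\epsilon>0$ in the statement is exactly what makes this summability work.

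The main obstacle I expect is the first step, namely justifying that the constant $c_2$ in \eqref{eq:SchurSandwich} is deterministic and independent of $T$ and of the $Q_t$. The Schur complement $S$ itself depends on the random rotations. I would handle it by noting that the full augmented Gram matrix equals $\frac1T\sum_t \Dt^*A^*A\Dt$. This is an average of matrices unitarily equivalent to $A^*A$, so its smallest eigenvalue is at least $\sigma_{\min}(A)^2$. The Schur complement of a positive definite matrix has smallest eigenvalue at least that of the full matrix, which gives $c_2\ge\sigma_{\min}(A)^2$ uniformly. Once this is settled, the remaining steps are routine.
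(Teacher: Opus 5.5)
Your proposal is correct and follows the paper's proof. Both reduce via \eqref{eq:SchurUB} to an almost-sure rate for the entries of $\overline{D}_T=\frac1T\sum_t\Dtn$ (iid, mean zero, bounded), then pass to the Frobenius norm and hence the operator norm of this fixed-size matrix. The differences are minor: you get the scalar rate from Hoeffding plus Borel--Cantelli (or Hartman--Wintner) rather than Durrett's Theorem 2.5.11, and you explicitly justify the uniform bound $\lambda_{\min}(S)\geq\sigma_{\min}(A)^2$ through the augmented Gram matrix and the Schur-complement eigenvalue inequality, which the paper only asserts in Lemma \ref{lemma:SchurComplement}.
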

\begin{proof}
    We study the random variable
    \begin{equation}
        \overline{D}_T = \frac{1}{T}\sum_t \Dtn.
    \end{equation}
    We know that $\mathbb{E}_{Q_t}(\Dtn)=0$. Thus, for any entry of $\Dtn$ $d_i$, $\mathbb{E}(d_i)=0$ and $\mathbb{E}(d_i^2)<\infty$ as it is a continuous function integrated over a compact set. Therefore, if $\overline{d}_T^i=T^{-1}\sum_t d_t^i$ is an entry of $\overline{D}_T$, according to Theorem 2.5.11 of \citep{Durrett_2019}, for any $\epsilon >0$, 
    \begin{equation}
        \frac{\overline{d}_T\sqrt{T}}{\ln(T)^{1/2+\epsilon}}\xrightarrow[]{a.s.}0.
    \end{equation}
    Moreover, if $\overline{D}_T$ is a square matrix of fixed size $m\times m$, then its Frobenius norm is such that
    \begin{equation}
        ||\overline{D}_T||_F \leq m \max_{kl}|{\overline{d}_T}_{kl}|.
    \end{equation}
    The maximum is run over all entries of $\overline{D}_T$. Additionally, the modulus operation is continuous, thus
    \begin{equation}
        \frac{\norm{\overline{D}_T}_F\sqrt{T}}{\ln(T)^{1/2+\epsilon}}\xrightarrow[]{a.s.}0.
    \end{equation}
    Let $\|\cdot\|$ denote the operator norm induced by the $2$-norm. Since $\|\cdot\|_F\geq \|\cdot\|$, the above convergence property holds for $\|\cdot\|$. Using the submultiplicativity of operator norms we obtain
    \begin{equation}
        \frac{\norm{\An\overline{D}_T^*}\sqrt{T}}{\ln(T)^{1/2+\epsilon}}\xrightarrow[]{a.s.}0.
    \end{equation}
    Applying this result to \eqref{eq:SchurUB}, we obtain the desired result
    \begin{equation*}
        \frac{||\betan||_2\sqrt{T}}{\ln(T)^{1/2+\epsilon}}\xrightarrow[]{a.s.}0. 
        \qedhere 
    \end{equation*}
\end{proof}
\begin{remark}
    Using Theorem 8.5.2 in \citep{Durrett_2019} instead, we may show that there exists $\lambda >0$ such that 
    \begin{equation}
        \limsup_{T\to \infty}\frac{\esym(\Pa)\sqrt{T}}{\sqrt{\ln(\ln(T))}}=\lambda \text{ a.s.}
    \end{equation}
    Therefore, while convergence is faster than $\sqrt{\ln(T)/T}$, this rate is near optimal.
\end{remark}
\begin{remark}
    Nevertheless, if we relax the convergence requirements to only require convergence in probability, we can improve upon the rate of convergence to $T^{-1/2}h(T)$ where $h(T)$ is a measurable function that is unbounded at infinity. Indeed, there exists $\lambda>0$ such that for all $\delta>0$
    \begin{equation}
         \label{eq:improved_MC_rate}
        \mathbb{E}(\esym (\Pa)) \leq \frac{\lambda}{ \sqrt{T}},
    \end{equation}
    and the conclusion follows from Markov's inequality.
\end{remark}
\begin{proof}[Sketch of Proof of \eqref{eq:improved_MC_rate}]
    We use \eqref{eq:SchurUB} and take the expectation to obtain
    \begin{equation}
        c_2\mathbb{E}(\|\beta_N\|_2)\leq \mathbb{E}(\|\overline{D}_T\|)\gamma,
    \end{equation}
    where $\gamma$ is constant (wrt the $Q_t$). 
    Additionally,
    \begin{align}
        \norm*{\frac{1}{T}\sum_t \Dtn}_F^2
        &=\frac{1}{T^2}\left(\|\Dtn\|^2_F+2\sum_{k\neq l}\Tr( D_{k,\mathrm{N}}^{*} D_{l,\mathrm{N}})\right).
    \end{align}
    Taking the expectation, we obtain,
    \begin{align}
        \mathbb{E}(\|\overline{D}_T\|_F^2)&= \frac{1}{T^2}\left(\sum_t\mathbb{E}(\|D_{t,\mathrm{N}}\|_F^2)+2\sum_{k\neq l}\Tr(\mathbb{E}(D_{k,\mathrm{N}}^*D_{l,\mathrm{N}}))\right)\\
        &=\frac{1}{T^2}\left(\sum_t\mathbb{E}(\|D_{t,\mathrm{N}}\|_F^2)+2\sum_{k\neq l}\Tr(\mathbb{E}(D_{k,\mathrm{N}}^*)\mathbb{E}(D_{l,\mathrm{N}}))\right)\\
        &=\frac{1}{T}\mathbb{E}(\|D_{1,\mathrm{N}}\|_F^2)\eqdef \frac{\mu}{T}.
    \end{align}
    With $\mu = \mathbb{E}(\|D_{1,\mathrm{N}}\|^2_F)$. This quantity is well defined as it is a real-valued integral of a continuous function over a compact set.
    Jensen's inequality, the fact that $\|\cdot\|_F\geq \|\cdot\|_2$ and Remark \ref{remark:esymdec} yield the desired result.
\end{proof}

While random augmentations are easy to implement, it has an asymptotic error rate lower bounded by $T^{-1/2}$, which, given the discussion in Section \ref{section:AngMom}, is insufficient.


\section{Numerical Tests}
\label{sec:NumTest}
We illustrate the theory developed in the previous section with a range of numerical tests. For these tests, we choose $d \in \{2, 3\}$ and $N=3$, i.e. systems consisting of three two- or three-dimensional particles. The choice $N = 3$ corresponds to the simplest non-trivial invariant basis. Indeed, for $N=1$, the only rotation-invariant functions are the constant functions, while for $N=2$, the position of any one particle determines the entire system. Only from $N=3$ onwards do we observe non-trivial couplings. 

Restricting our tests to relatively few particles is also motivated by the fact that in much larger point cloud models, dimension reduction techniques are always applied to reduce the many particle interaction to moderate correlation order. For example, the highly successful MACE model~\citep{MACE} employs just two layers of $N=3$ for modelling complex interatomic interactions. 

An additional common choice in molecular dynamics especially for materials systems is that particles are rotated into a reference frame (natural unit cells or supercells). We therefore also consider non uniform distributions. However, these distributions have the property that when they are averaged over $\mathrm{SO}(d+1)$ they become the uniform distribution. 
Except when otherwise stated, we only vary the distributions when fitting the model, the error will be computed using the uniform distribution. Our models are not permutation invariant, thus when sampling the non-uniform distributions, the choice of the constrained particles is fixed throughout the tests. It would have been possible to make our bases permutation invariant, but we chose not to as we wish to focus on the specific case of rotation invariance.

\subsection{Results in Two Dimensions}

\begin{table}[!h]
    \centering
    \begin{tabular}{|c|c|c|c|}
        \hline
                       & Particle 1 & Particle 2 & Particle 3 \\
                       \hline
        $UUU$ &  $U$ &  $U$ &  $U$ \\
        \hline
        $\delta UU$  & $\delta$ &  $U$ &  $U$ \\
        \hline
        $\delta^*UU$ & $M$ &  $U$ &  $U$ \\

        \hline

    \end{tabular}
    \caption{The used measures are the product of all measures in a row. $U$ is the uniform measure, $\delta$ the Dirac measure at 0 and $\delta^*$ is a von Mieses measure concentrated at 0. In this paper, we chose $\kappa = 100$. It is intended to be a mollification of the dirac measure.}
    \label{tab:1}
\end{table}

\begin{figure}
    \centering
    \begin{subfigure}{0.3\textwidth}
        \centering
        \includegraphics[width=\textwidth]{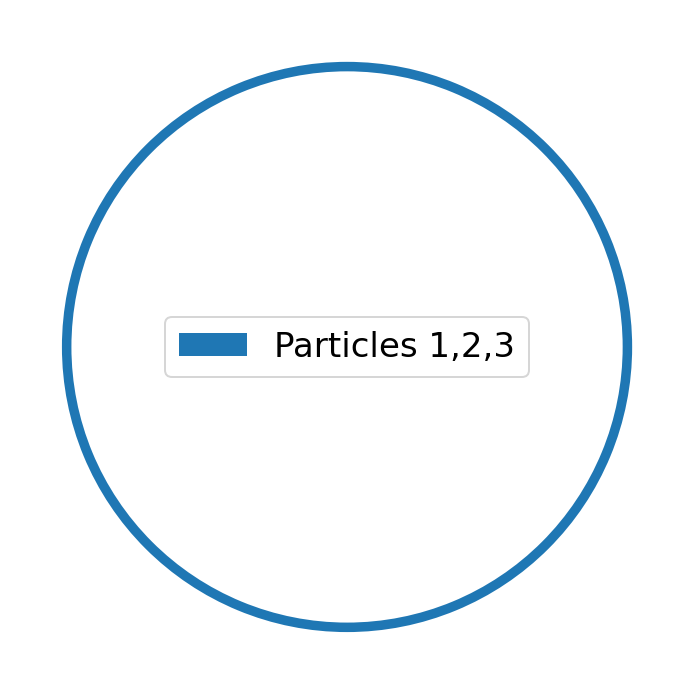} 
        \caption{$UUU$}
        \label{fig:2dD1}
    \end{subfigure}
    \begin{subfigure}{0.3\textwidth}
        \centering
        \includegraphics[width=\textwidth]{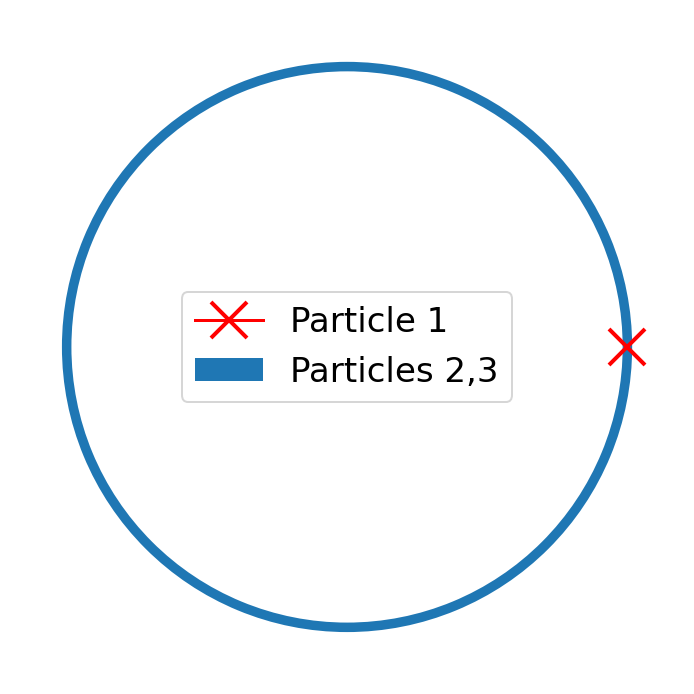} 
        \caption{$\delta UU$}
        \label{fig:2dD2}
    \end{subfigure}
    \begin{subfigure}{0.3\textwidth}
        \centering
        \includegraphics[width=\textwidth]{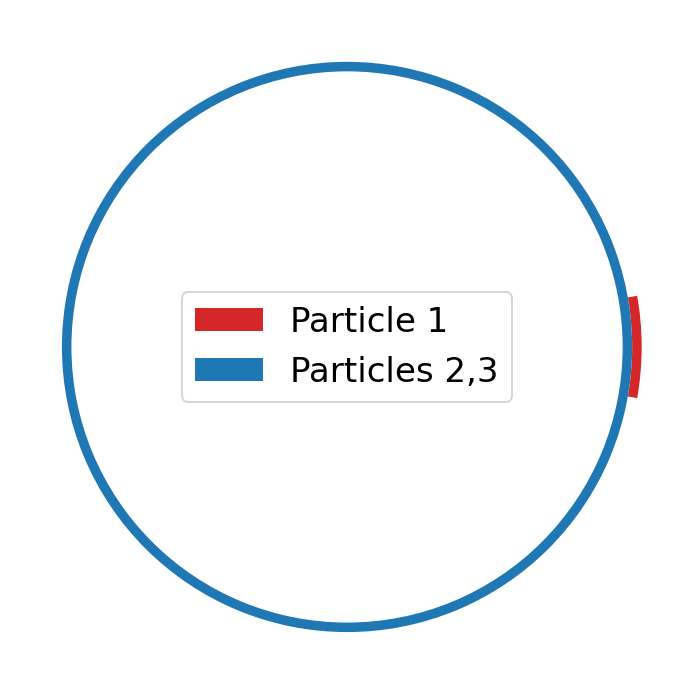} 
        \caption{$\delta^*UU$}
        \label{fig:2dD3}
    \end{subfigure}
    \caption{An illustration of the distributions mentioned in Table \ref{tab:1}}.
    \label{fig:distributions2D}
\end{figure}

The distributions used for two-dimensional tests are summarized in Table \ref{tab:1} and illustrated in Figure \ref{fig:distributions2D}.

As target function we choose a high-degree polynomial $f \in B_{30}^{3,1}$, i.e.,  such that its coefficients  have an exponential decay of rate $\alpha=2$, 
\begin{equation}
    f(\Theta) = \sum_{\mathbf{k}\in\mathbb{Z}^N,\norm{\mathbf{k}}_{1}\leq \tilde{K}}c_\mathbf{k} e^{-\alpha||\mathbf{k}||_1} e^{\mathbf{k}\cdot \Theta}, 
\end{equation}
with $c_\mathbf{k}$ drawn uniformly from $[-1,1]$. The exponential decay is a natural generalization of analyticity in the univariate setting.
Due to the assumed decay, it is straightforward to see that there exists $C>0$ such that  
\begin{equation}
    ||f-f_K||_{L_2} \leq Ce^{-\alpha K},
\end{equation}
where $f_K \in B^{3,1}_K$ is the degree $K$ truncation ($L^2$-orthogonal projection) of $f$.
For sufficient training, all LSQ methods should attain this approximation error rate.

\begin{figure}
    \centering
    \begin{subfigure}{0.35\textwidth}
        \centering
        \includegraphics[height=3.8cm]{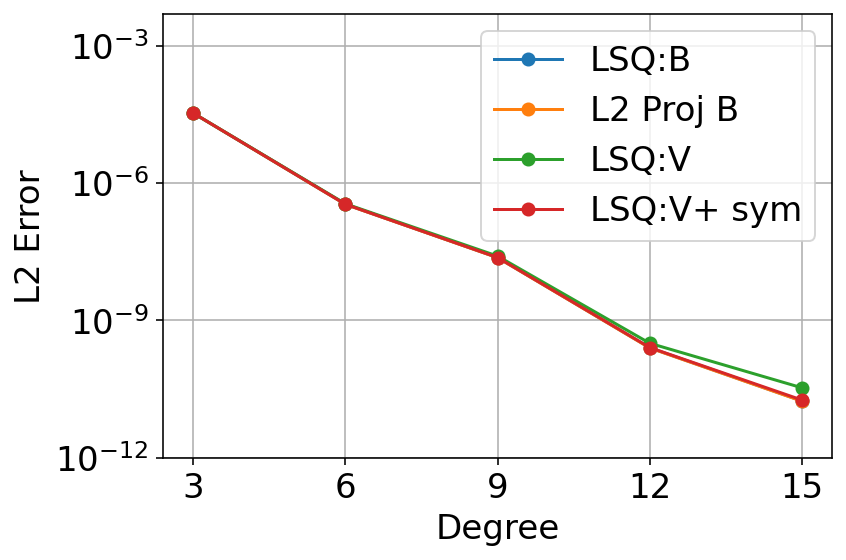} 
        \caption{$UUU$}
        \label{fig:LSQ2dD1}
    \end{subfigure}
    \begin{subfigure}{0.3\textwidth}
        \centering
        \includegraphics[height=3.7cm]{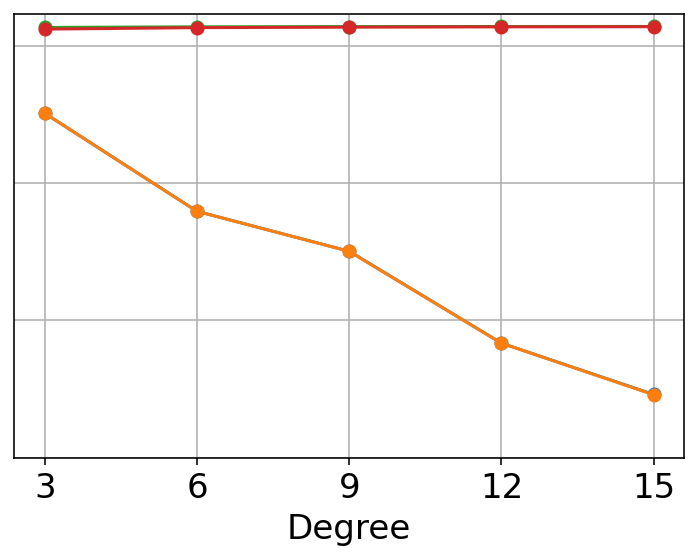} 
        \caption{$\delta UU$}
        \label{fig:LSQ2dD2}
    \end{subfigure}
    \begin{subfigure}{0.3\textwidth}
        \centering
        \includegraphics[height=3.7cm]{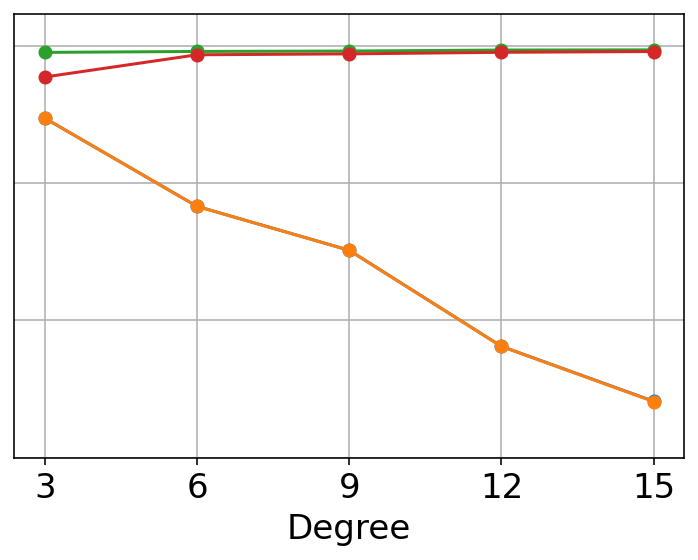} 
        \caption{$\delta^*UU$}
        \label{fig:LSQ2dD3}
    \end{subfigure}
    \caption{Approximation rates for the different distributions. Our sample size is 8000 and testset size is 2000. The design matrix for $\delta^*UU$ was found to be ill-conditioned, thus the LSQ required regularisation, which we performed by choosing a cutoff singular value of $10^{-4.5}$. This cutoff has been chosen as the one that minimises the approximation error. All three graphs share the same the y-axis. Remark that the blue, green and red plots are indistinguishable.}
    \label{fig:LSQ2d}
\end{figure}

Figure \ref{fig:LSQ2d} shows our LSQ scheme's performances when sampling the different distributions (cf Table \ref{tab:1}); the rotation invariant LSQ \eqref{eq:invlsq} always achieves the optimal approximation error. This is also the case for the full basis LSQ \eqref{eq:fullLSQ} with $UUU$ samples (see subfigure \ref{fig:LSQ2dD1}). This approximation error is not exactly optimal for the higher degrees, as we are only using a training set of size 8000. However, we can see that $\sym$ improves this error to the optimal one. Subfigure \ref{fig:LSQ2dD2} shows that fitting the full basis is impossible given data sampled from $\delta UU$ as it is lacking information on Particle 1. Figure \ref{fig:LSQ2dD3} shows that this is also the case when the constraint on Particle $1$ has slightly been relaxed.

\begin{figure}
    \centering
    \begin{subfigure}{0.35\textwidth}
        \centering
        \includegraphics[height=3.8cm]{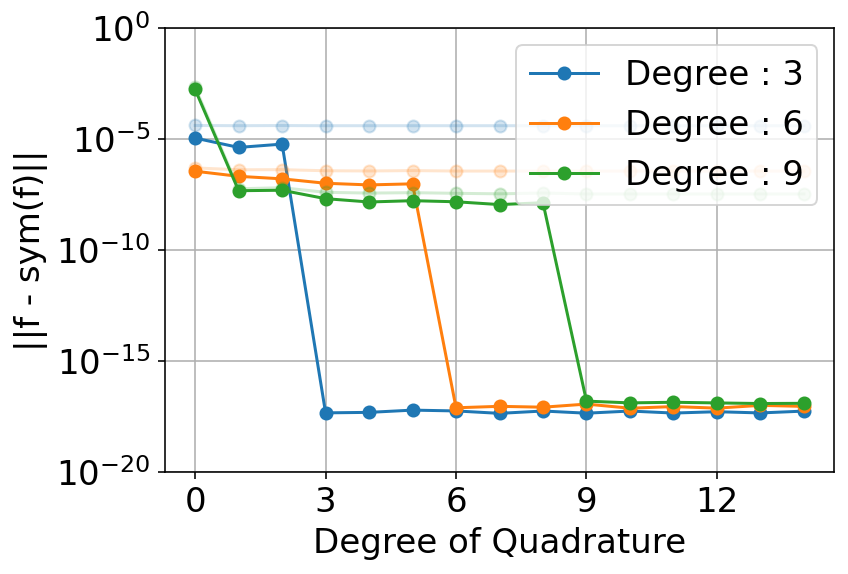} 
        \caption{$UUU$}
        \label{fig:quadpsym2dD1100D1}
    \end{subfigure}
    \begin{subfigure}{0.3\textwidth}
        \centering
        \includegraphics[height=3.7cm]{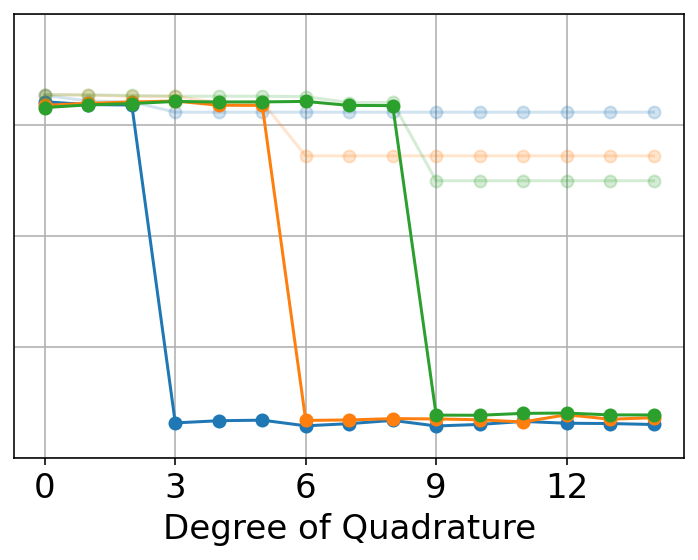} 
        \caption{$\delta UU$}
        \label{fig:quadpsym2dD2100D2}
    \end{subfigure}
    \begin{subfigure}{0.3\textwidth}
        \centering
        \includegraphics[height=3.7cm]{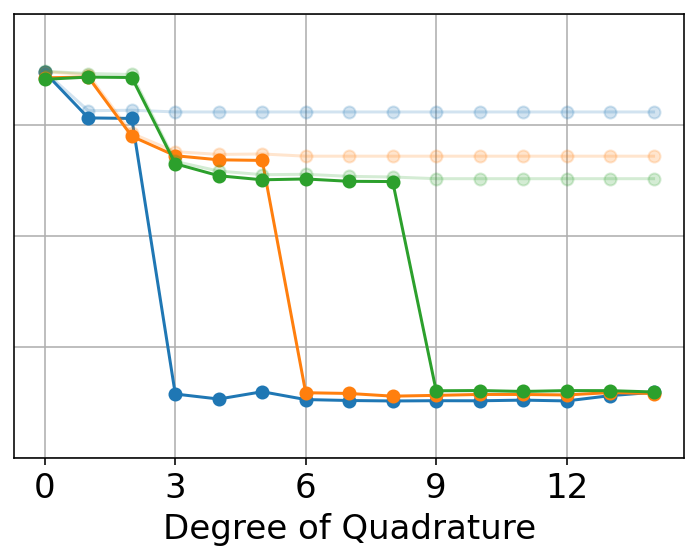} 
        \caption{$\delta^*UU$}
        \label{fig:quadpsym2dD3100D3}
    \end{subfigure}
    \caption{$\varepsilon_{sym}$ for quadrature augmentation using $UUU$, $\delta UU$, $\delta^*UU$. The (unaugmented) training set size is 800 and the validation set size is 200. All figures share the same y-axis. The shaded lines correspond to the approximation error.}
    \label{fig:quadpsym2d100}
\end{figure}

On Figure \ref{fig:quadpsym2d100} we can see that a quadrature augmented LSQ \eqref{eq:lsqaug} achieves exact symmetry learning for all sampling distributions. In fact, exact symmetry is achieved immediately after the quadrature rule's degree overtakes that of the LSQ system. We observe that this numerical experiment also illustrates Remark \ref{remark:noimprovement} as there is no meaningful improvements in $\esym$ before the quadrature degree overcomes the approximation degree. If a quadrature augmentation symmetrised all the lower degree terms of the LSQ approximation, then we would expect an exponential decay in the symmetrisation error due to the decay in the polynomial coefficients - this follows from Remark \ref{remark:esymdec}.

\begin{figure}
    \centering
    \begin{subfigure}{0.35\textwidth}
        \centering
        \includegraphics[height=3.7cm]{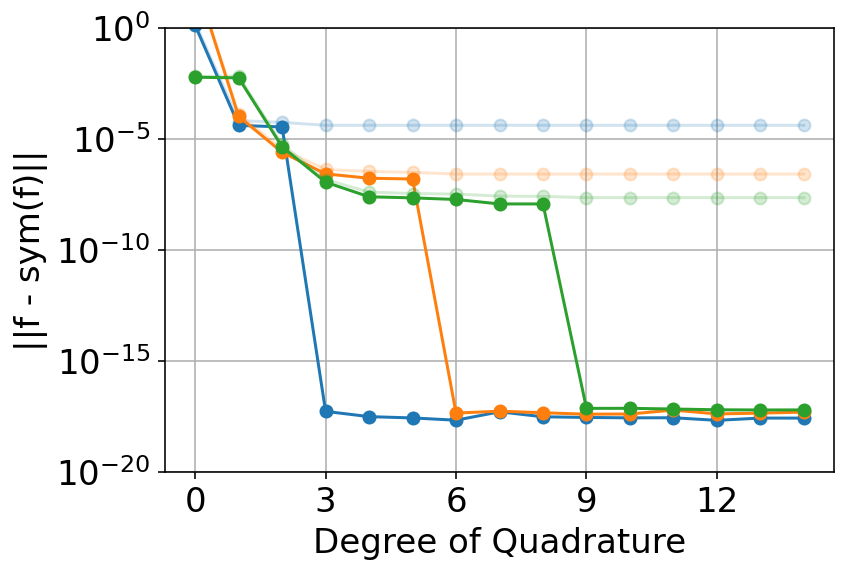} 
        \caption{$d=1$}
        \label{fig:quadpsym2dD1100}
    \end{subfigure}
    \caption{$\esym$ when no regularisation is applied for quadrature augmentation using $\delta^*UU$. The training set-size is 800 and the validation set-size is 200. }
    \label{fig:NRquad2d}
\end{figure}

In the proofs of Theorem \ref{thm:quadaugment} and $\ref{thm:randaugment}$ we supposed that the design matrix had full rank for the sake of simplicity. However, this is not the case for the full basis LSQ \eqref{eq:fullLSQ} when sampling $\delta UU$. And when sampling $\delta^*UU$, while the resulting design matrix has full rank, it is ill-conditioned. For instance, for for an approximation of degree $9$ and $800$ data points in the learning set, the smallest singular value is on the order of $10^{-6}$. This explains the need to regularise. However, data augmentation leads to better conditioned systems and quadrature augmentation will still lead to exact symmetry learning. This is illustrated in Figure \ref{fig:NRquad2d}.

\begin{figure}[!htb]
    \centering
    \begin{subfigure}{0.35\textwidth}
        \centering
        \includegraphics[height=3.8cm]{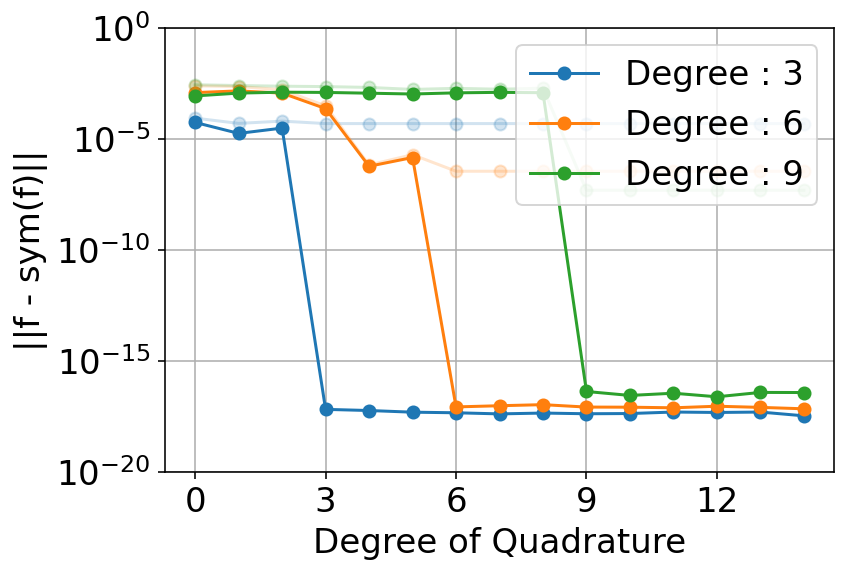} 
        \caption{$UUU$}
        \label{fig:quadpsym2dD1low}
    \end{subfigure}
    \begin{subfigure}{0.3\textwidth}
        \centering
        \includegraphics[height=3.7cm]{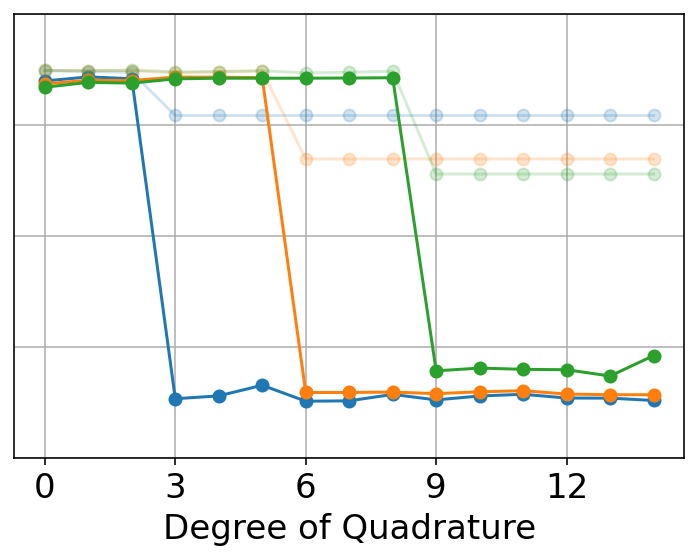} 
        \caption{$\delta UU$}
        \label{fig:quadpsym2dD2low}
    \end{subfigure}
    \begin{subfigure}{0.3\textwidth}
        \centering
        \includegraphics[height=3.7cm]{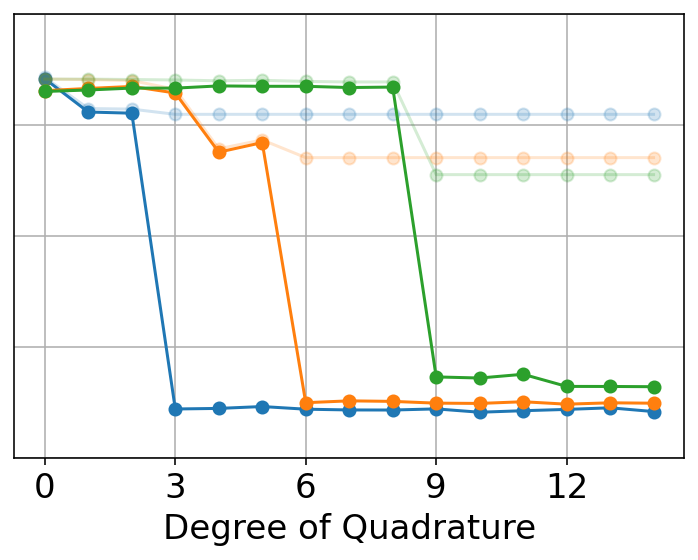} 
        \caption{$\delta^*UU$}
        \label{fig:quadpsym2dD3low}
    \end{subfigure}
    \caption{$\varepsilon_{sym}$ for quadrature augmentation for the $UUU$, $\delta UU$ and $\delta^*UU$. The (unaugmented) training data set size is now 100. The validation set size is 100. All figures share the same y-axis. The shaded lines correspond to the approximation error.}
    \label{fig:quadpsym2dlow}
\end{figure}
In Figure \ref{fig:quadpsym2dlow} we have now decreased the training dataset to 100 points. This experiment demonstrates that even for a insufficiently sized dataset, quadrature augmentation will eventually lead to perfect symmetrisation. 

\begin{figure}[!htb]
    \centering
    \begin{subfigure}{0.35\textwidth}
        \centering
        \includegraphics[height=3.8cm]{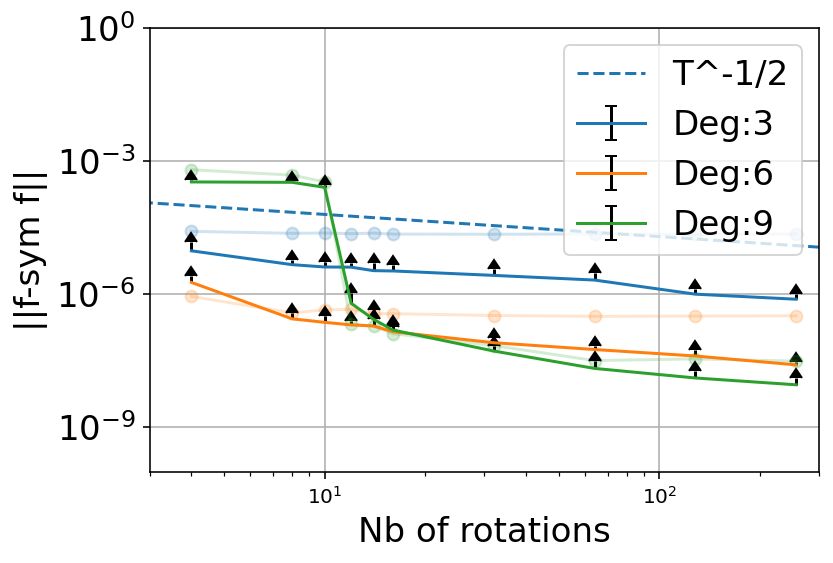} 
        \caption{$UUU$}
        \label{fig:Augpsym2dD1}
    \end{subfigure}
    \begin{subfigure}{0.3\textwidth}
        \centering
        \includegraphics[height=3.7cm]{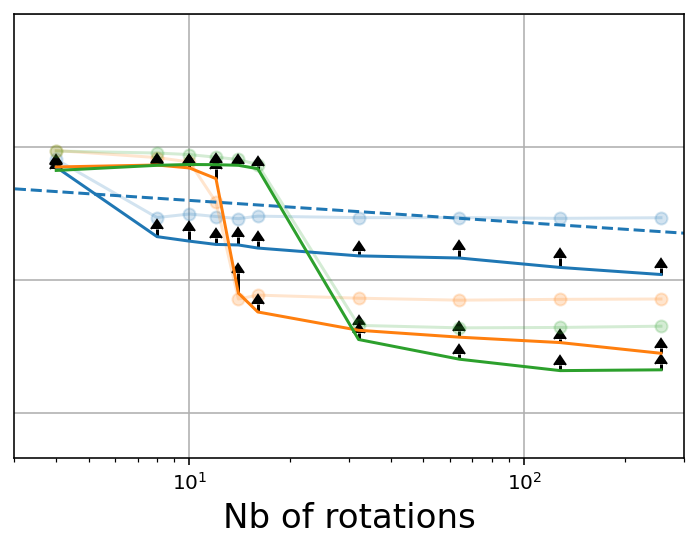} 
        \caption{$\delta UU$}
        \label{fig:Augpsym2dD2}
    \end{subfigure}
    \begin{subfigure}{0.3\textwidth}
        \centering
        \includegraphics[height=3.7cm]{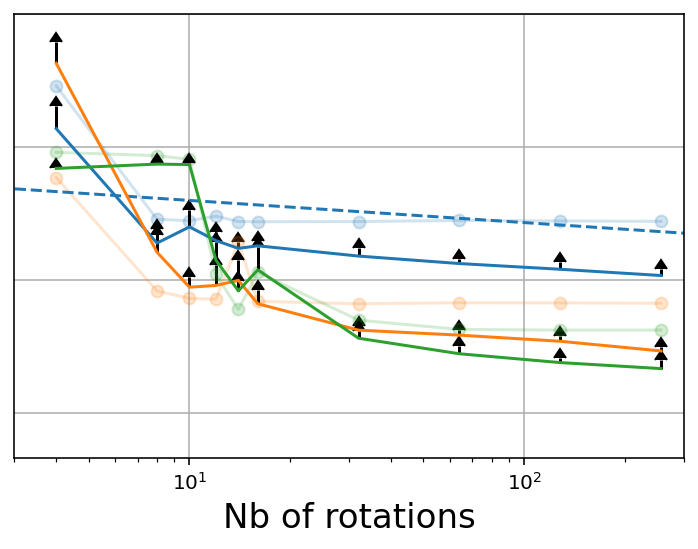} 
        \caption{$\delta^*UU$}
        \label{fig:Augpsym2dD3}
    \end{subfigure}
    \caption{$\varepsilon_{sym}$ for a random augmentation for $UUU$, $\delta UU$ and $\delta^*UU$. The (unaugmented) training data set size is 100 and the validation set size is 100. The number of rotations are powers of 2 between 4 and 256. For each distribution we simulate 10 independent scenarii. The mean results are the plotted lines and the error-arrow represents 1 standard deviation. There is no lower arrow to help with visualisation as around the drops the simulations are unstable and the error interval reaches into the negatives. The dotted blue line represents a $T^{-1/2}$ decay. The shaded lines correspond to the approximation error of the first trial.
    \label{fig:Augpsym2d}}
\end{figure}

Figure \ref{fig:Augpsym2d} shows us the result of random augmentation \eqref{eq:lsqaug}. In the asymptotic regime we clearly observe the rate $T^{-1/2}$ as predicted by \ref{thm:randaugment}. However, in the pre-asymptotic regime we observe a sharp drop in the symmetrization error which we hypothesize corresponds to the moment all parameters are fully determined. This is especially well illustrated by the concentrated distributions. Indeed, some basis elements are undetermined in the unaugmented dataset since particle 1 is always constrained to be at $\theta=0$. This can also be observed in the relatively early drop for $\delta^*UU$ that can be observed in Figure \ref{fig:Augpsym2dD3} as in that case Particle 1 is not fully  constrained and thus these basis elements have relatively more information. Additionally, around that drop the standard deviation is quite high, which can be explained by the fact that the LSQ system is on the cusp of having enough data to fully determine these few basis elements.  However, there is one a priori counterintuitive observation. For a fixed augmentation $\varepsilon_{\sym}$ decreases when the degree increases. One would expect that a lower degree approximation is easier to symmetrise since there are less parameters. The upper bound in \eqref{eq:SchurUB} may explain this phenomenon. Due to the high regularity of our target function, for a well-fitted invariant LSQ \eqref{eq:invlsq}, the term $\norm{Y-\Ai \overline{\betai}}$ decays exponentially in $K$. And so even if $\norm{\overline{D}_T^*\An^*}$ were to grow for increasing $K$, overall, $\norm{\overline{D}_T^*\An^*}\norm{Y-\Ai \overline{\betai}}$ will be decreasing in $K$ for a sufficiently smooth target. We illustrate this behavior in the appendix (section \ref{Appendix}).

\begin{figure}[!htb]
    \centering
    \begin{subfigure}{0.35\textwidth}
        \centering
        \includegraphics[height=3.8cm]{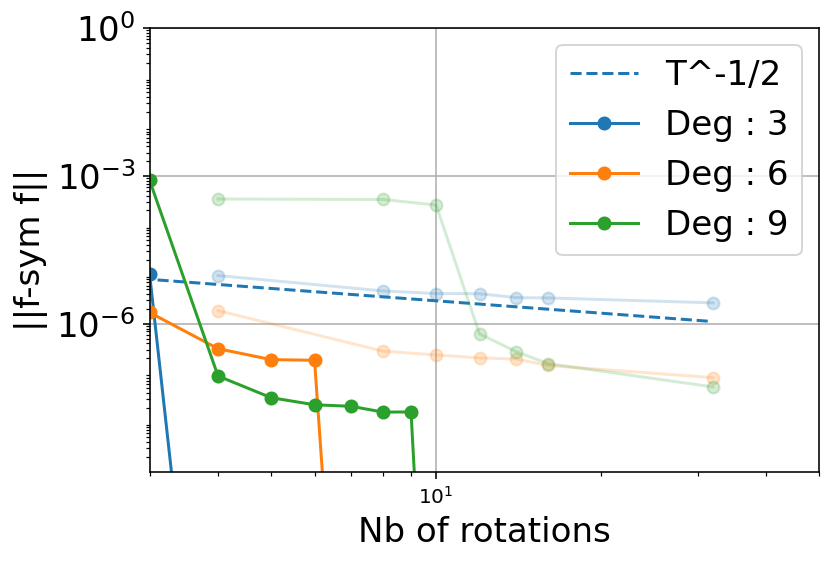} 
        \caption{$UUU$}
        \label{fig:comppsym2dD1}
    \end{subfigure}
    \begin{subfigure}{0.3\textwidth}
        \centering
        \includegraphics[height=3.7cm]{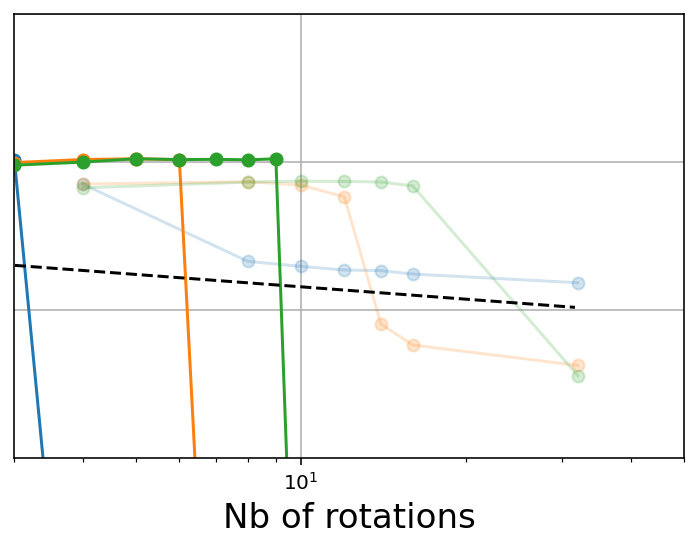} 
        \caption{$\delta UU$}
        \label{fig:comppsym2dD2}
    \end{subfigure}
    \begin{subfigure}{0.3\textwidth}
        \centering
        \includegraphics[height=3.7cm]{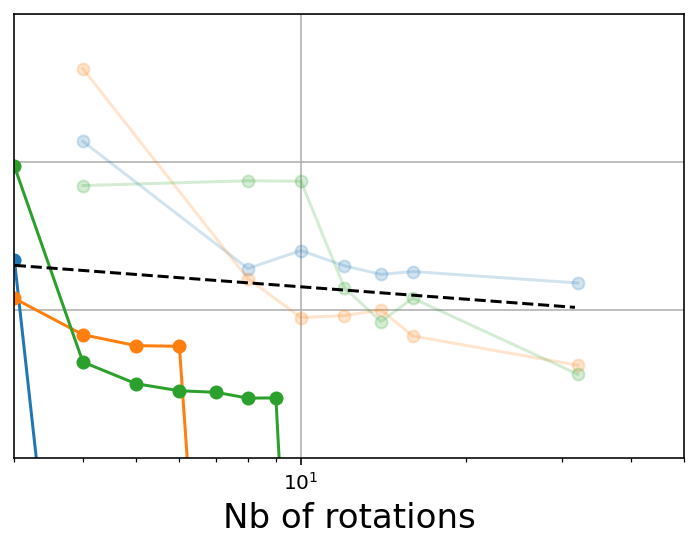} 
        \caption{$\delta^*UU$}
        \label{fig:comppsym2dD3}
    \end{subfigure}
    \caption{The superposition of Figures \ref{fig:quadpsym2d100} and \ref{fig:Augpsym2d}. The shaded plots correspond to random augmentation while the solid ones correspond to quadrature augmentation. All plots share the same y-axis.
    \label{fig:compare2d}}
\end{figure}

Figure \ref{fig:compare2d} compares the symmetrisation error $\esym$ of both random and quadrature augmentation \eqref{eq:lsqaug}. While our proofs do not guarantee that quadrature augmentation will be significantly better than random augmentation before exact symmetry learning (cf. Remark \ref{remark:noimprovement}), we observe that empirically quadrature augmentation is an order of magnitude better than random augmentations. Remark that the square root behaviour is not visible as the system has not yet reached the asymptotic regime.

\subsection{Results in Three Dimensions}

\begin{table}[!h]
    \centering
    \begin{tabular}{|c|c|c|c|}
        \hline
                       & Particle 1 & Particle 2 & Particle 3 \\
                       \hline
        $UUU$ &  $U$ & $U$ & $U$ \\
        \hline
        $\delta UU$  & $\delta$ & $U$ & $U$ \\
        \hline
        $\delta^*UU$ & $\delta^*$ & $U$ & $U$ \\
        \hline
        $\delta H_1U$ & $\delta$ & $H_1$ & $U$\\
        \hline
        $\delta^*H_1^*U$ & $\delta^*$ & $H_1^*$ & $U$\\
        \hline

    \end{tabular}
    \caption{The used measures are the product of all measures in a row. $U$ is the uniform measure on $S^2$, $\delta$ the Dirac measure at the north pole, $H_1$ is a the uniform measure on a geodesic passing through the north pole. The symbol $*$ denotes a mollification of the measure, i.e., additional Gaussian noise. For our experiments we chose to perturb the spherical angles independently by a Gaussian with a standard deviation of 0.1. }
    \label{tab:2}
\end{table}

\begin{figure}
    \centering
    \begin{subfigure}{0.3\textwidth}
        \centering
        \includegraphics[height=3.7cm]{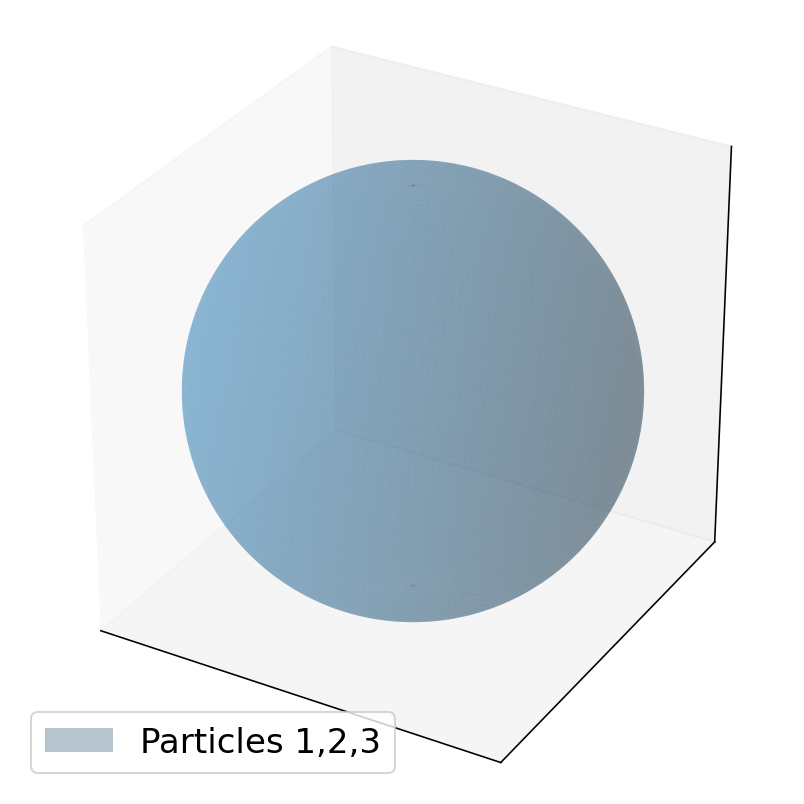} 
        \caption{$UUU$}
        \label{fig:3dD1}
    \end{subfigure}
    \begin{subfigure}{0.3\textwidth}
        \centering
        \includegraphics[height=3.7cm]{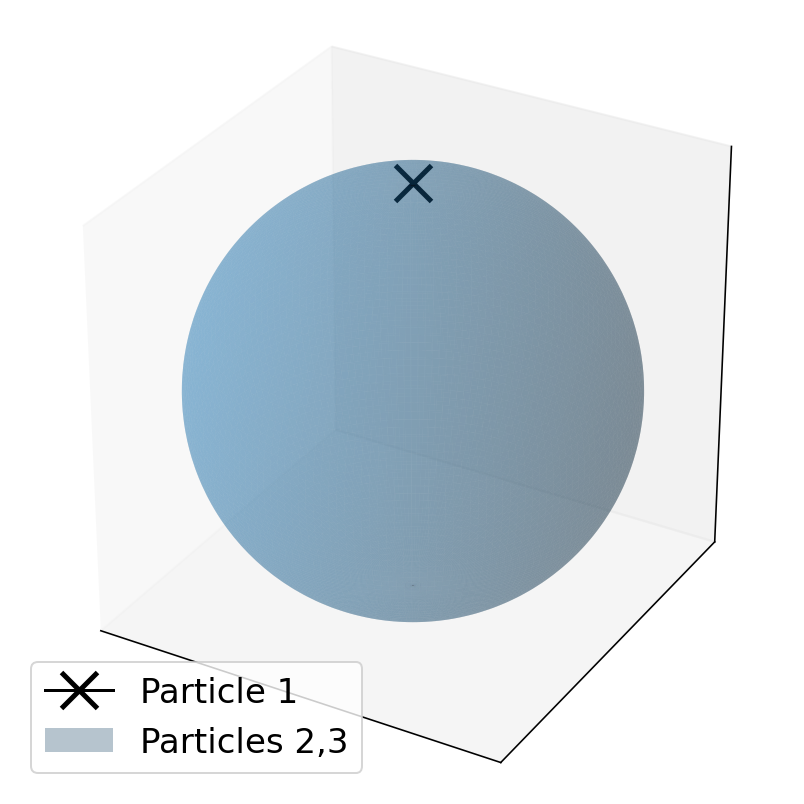} 
        \caption{$\delta UU$}
        \label{fig:3dD2}
    \end{subfigure}
    \begin{subfigure}{0.3\textwidth}
        \centering
        \includegraphics[height=3.7cm]{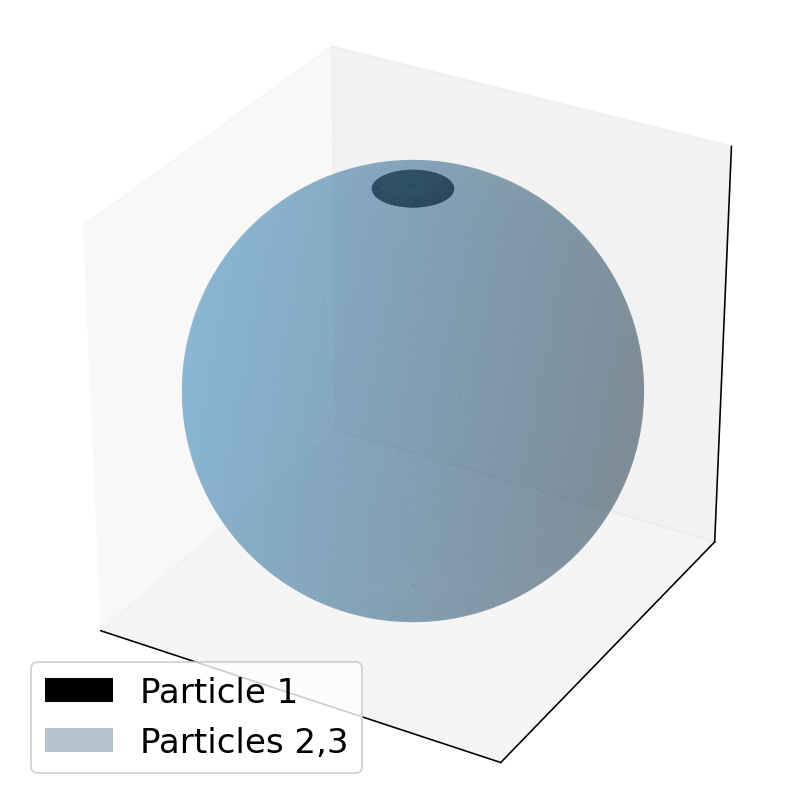} 
        \caption{$\delta^* UU$}
        \label{fig:3dD3}
    \end{subfigure}
    \vspace{1em}
    \begin{subfigure}{0.3\textwidth}
        \centering
        \includegraphics[height=3.7cm]{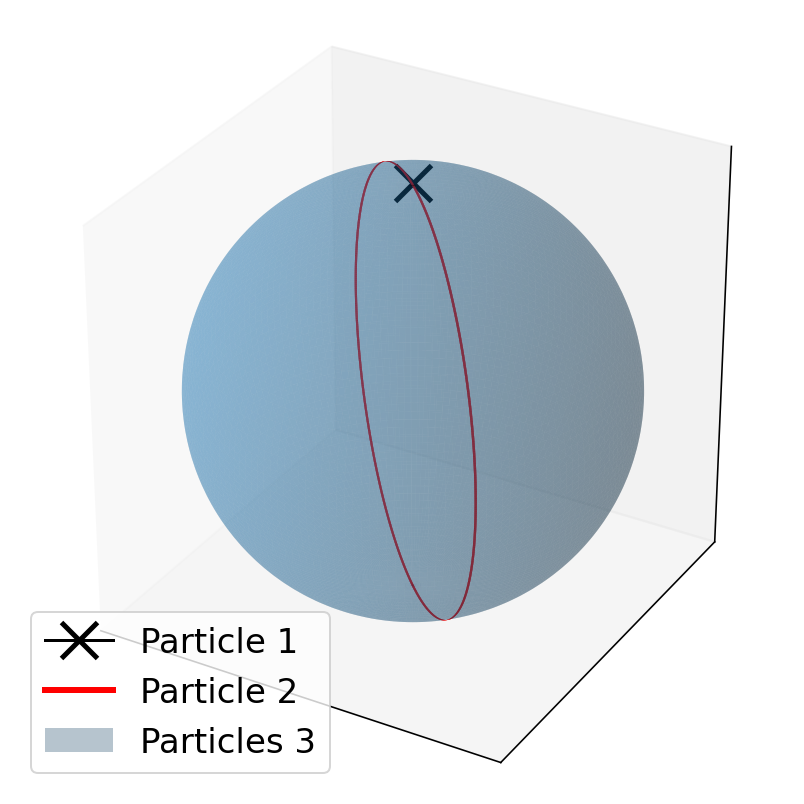} 
        \caption{$\delta H^1U$}
        \label{fig:3dD4}
    \end{subfigure}
    \begin{subfigure}{0.3\textwidth}
        \centering
        \includegraphics[height=3.7cm]{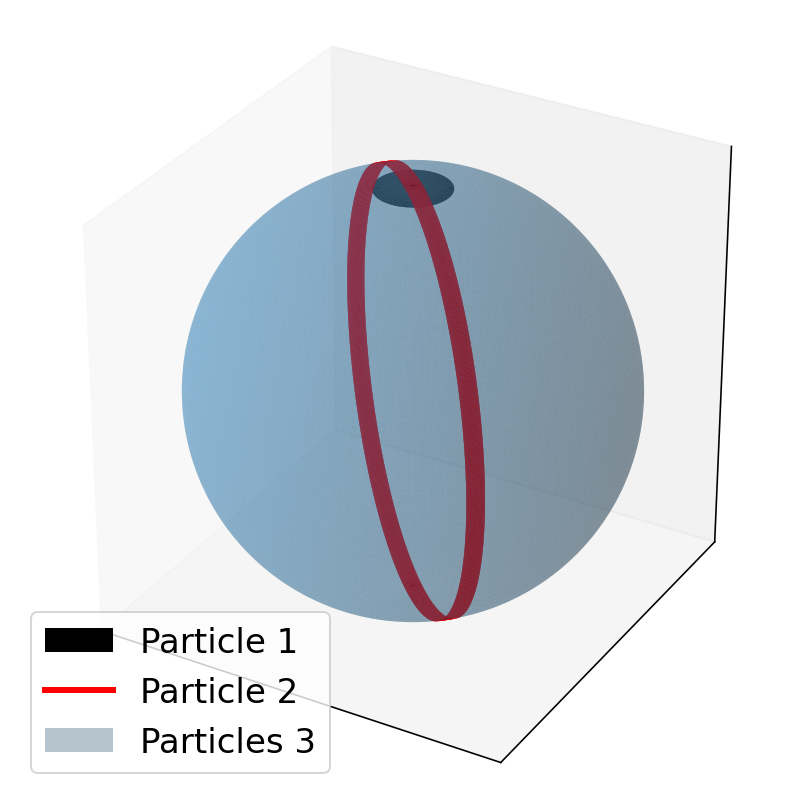} 
        \caption{$\delta^* H_1^*U$}
        \label{fig:3dD5}
    \end{subfigure}
    \caption{Illustrations of the distributions mentioned in Table \ref{tab:2}}.
    \label{fig:distributions3D}
\end{figure}

The data distributions used in the three-dimensional experiments are summarized in Table \ref{tab:2} and illustrated in Figure \ref{fig:distributions3D}.

As we work with 3 particles, we may use \eqref{eq:3dIbasis} to build our approximation schemes as it enumerates an orthonormal basis for the space $B^{3,2}_{K}$. 
Again, we choose our target polynomial to have an exponential decay of rate $\alpha=2$ in its coefficients in order to mimic analyticity. That is, we may write our target function $f$ in the following form:
\begin{equation}
    f = \sum_{\mathbf{l}}c_{\mathbf{\mathbf{l}}}e^{-\alpha \norm{\mathbf{l}}_1}\tilde{\varphi}_{\mathbf{l}},
\end{equation}
where the $c_\mathbf{l}$ are chosen uniformly on $[-1,1]$.
Throughout our tests we observe qualitatively very similar results as in our two-dimensional tests. 

\begin{figure}
    \centering
    \begin{subfigure}{0.35\textwidth}
        \centering
        \includegraphics[height=3.8cm]{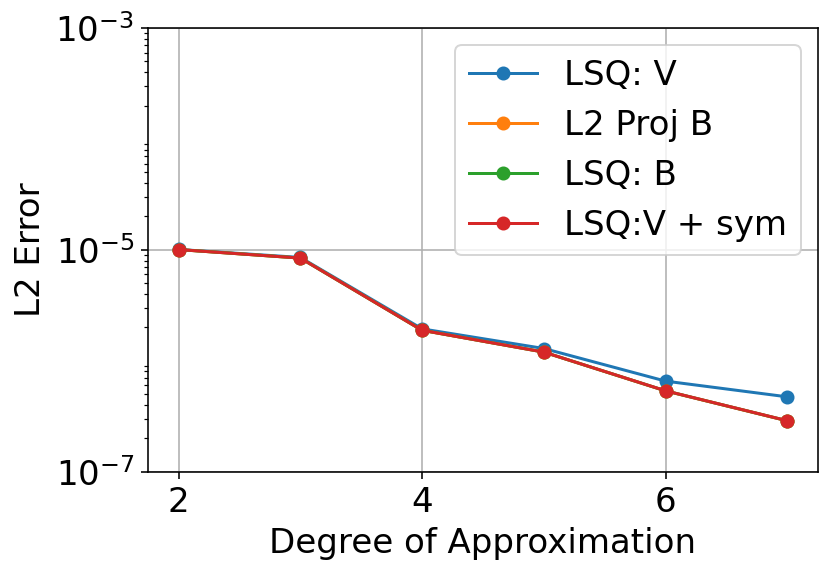} 
        \caption{$UUU$}
        \label{fig:LSQ3DD1}
    \end{subfigure}
    \begin{subfigure}{0.3\textwidth}
        \centering
        \includegraphics[height=3.7cm]{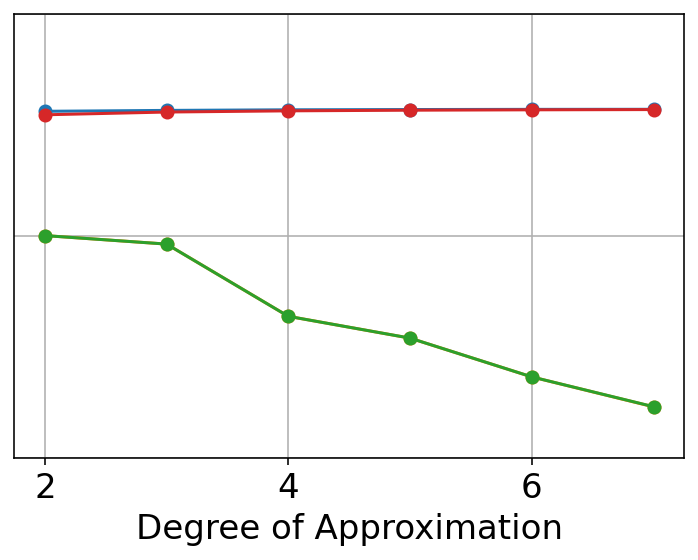} 
        \caption{$\delta UU$}
        \label{fig:LSQ3DD2}
    \end{subfigure}
    \begin{subfigure}{0.3\textwidth}
        \centering
        \includegraphics[height=3.7cm]{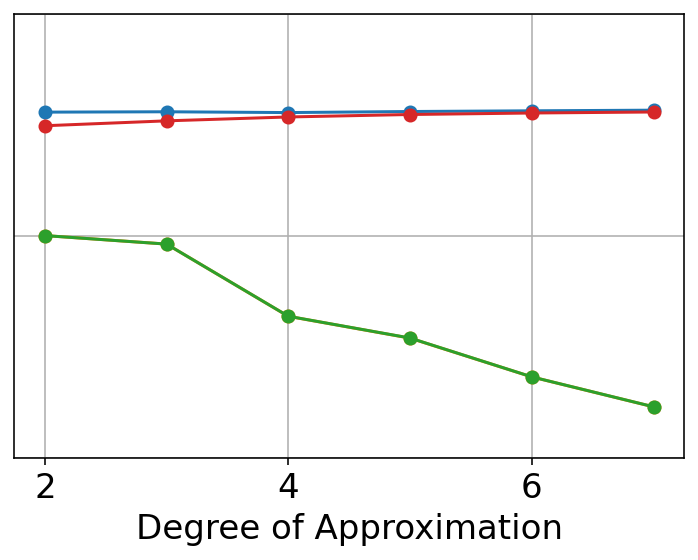} 
        \caption{$\delta^*UU$}
        \label{fig:LSQ3DD3}
    \end{subfigure}
    \vspace{1em}
        \begin{subfigure}{0.3\textwidth}
        \centering
        \includegraphics[height=3.7cm]{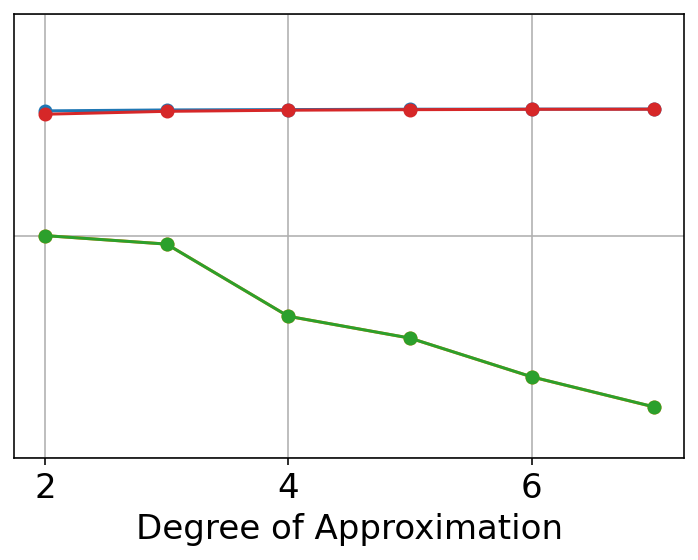} 
        \caption{$\delta H_1U$}
        \label{fig:LSQ3DD4}
    \end{subfigure}
    \begin{subfigure}{0.3\textwidth}
        \centering
        \includegraphics[height=3.7cm]{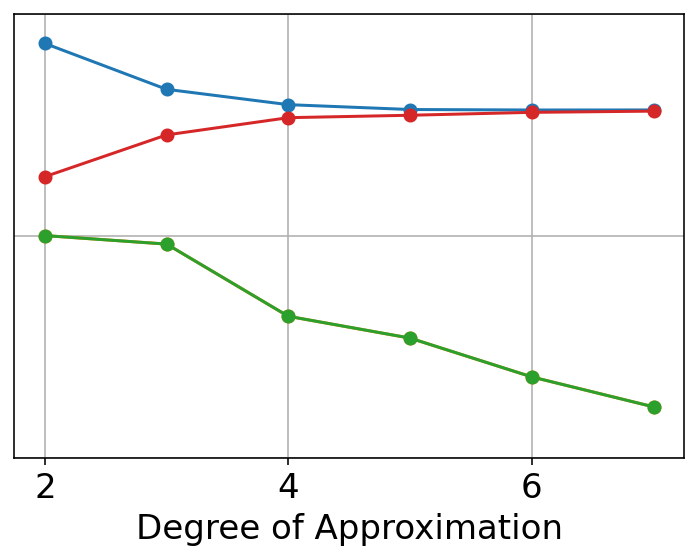} 
        \caption{$\delta^*H_1^*U$}
        \label{fig:LSQ3DD5}
    \end{subfigure}
    
    \caption{Approximation rates for the distributions in Table \ref{tab:2}. We approximate a degree 11 rotation-invariant function with 10000 points in the training set and 2500 points in the validation set. As before, the perturbed distributions $\delta^*UU$ and $\delta^*H_1^*U$ need to be regularised. All plots share the same y-axis.}
    \label{fig:LSQ3D}
\end{figure}
In Figure \ref{fig:LSQ3D} we recover the spectral convergence rate of the optimal polynomial approximation of an analytic function. Again, we had to use some regularisation for $\delta^*UU$ and $\delta^*H_1^*U$. The optimal cutoffs of singular values were found to be, respectively, $10^{-3.4}$ and $10^{-5}$.

Next, to test quadrature augmentation, we need to obtain a quadrature rule for the group average over $\mathrm{SO}(3)$. In general, obtaining quadrature rules on manifolds, for instance Lie Groups, is a non-trivial open problem. The specific case of $\mathrm{SO}(3)$ was treated in \cite{TUCQuad} from which we obtained quadrature rules of degree 1 through 11.

\begin{figure}
    \centering
    \begin{subfigure}{0.35\textwidth}
        \centering
        \includegraphics[height=3.8cm]{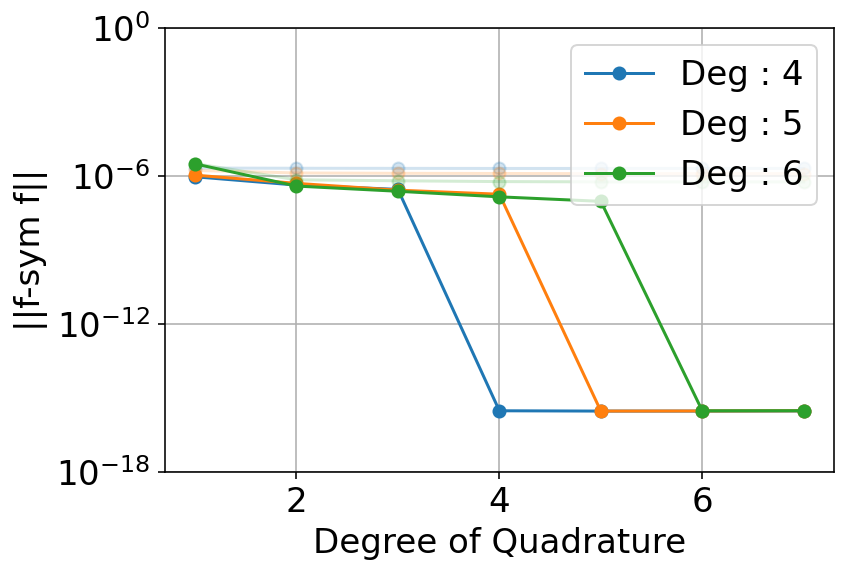} 
        \caption{$UUU$}
        \label{fig:8a}
    \end{subfigure}
    \begin{subfigure}{0.3\textwidth}
        \centering
        \includegraphics[height=3.7cm]{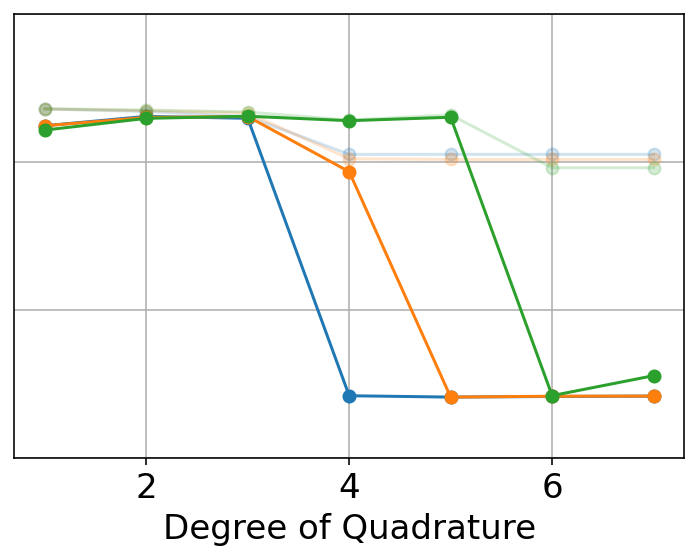} 
        \caption{$\delta UU$}
        \label{fig:8b}
    \end{subfigure}
    \begin{subfigure}{0.3\textwidth}
        \centering
        \includegraphics[height=3.7cm]{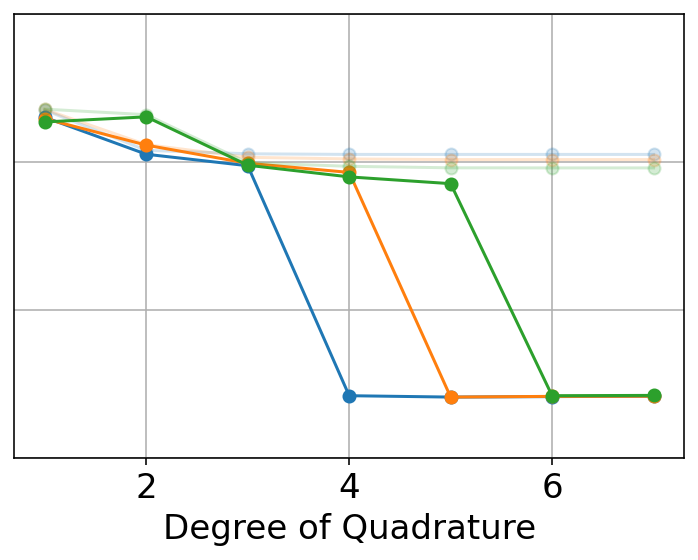} 
        \caption{$\delta^* UU$}
        \label{fig:8c}
    \end{subfigure}
    \vspace{1em}
        \begin{subfigure}{0.35\textwidth}
        \centering
        \includegraphics[height=3.8cm]{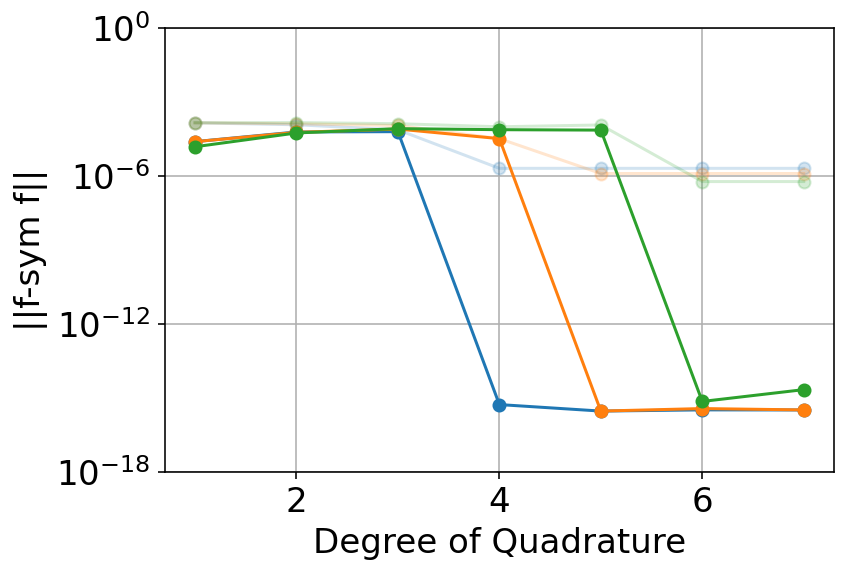} 
        \caption{$\delta H_1U$}
        \label{fig:8d}
    \end{subfigure}
        \begin{subfigure}{0.3\textwidth}
        \centering
        \includegraphics[height=3.7cm]{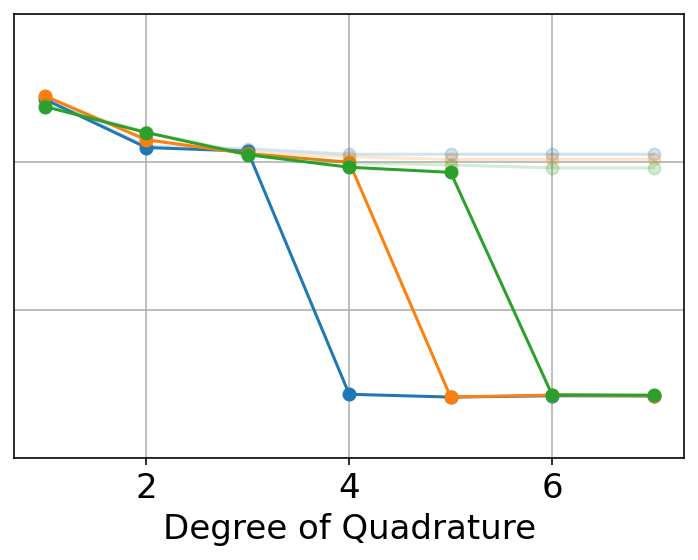} 
        \caption{$\delta^*H_1^*U$}
        \label{fig:8e}
    \end{subfigure}

    \caption{The symmetrisation error evolution for a quadrature augmentation. We approximate a target of degree 11 and the (unaugmented) training set size is 800 and the validation set size is 200. All figures share the same y-axis. The shaded lines correspond to the approximation error.}
    \label{fig:8}
\end{figure}

In Figure \ref{fig:8} we observe (consistent with the 2D tests) that a quadrature augmentation \eqref{eq:lsqaug} leads to perfect symmetry learning once the order of the quadrature exceeds the order of the model. Particularly noteworthy is that while $\delta H_1U$ did not require prior regularisation for an unaugmented LSQ, when the data is not sufficiently augmented the LSQ becomes highly unstable similarly to 2D. We conjecture that the constrained data leaves some basis elements undetermined until the rotations bring enough variety into the data set.

\begin{figure}[!htb]
    \centering
    \begin{subfigure}{0.35\textwidth}
        \centering
        \includegraphics[height=3.8cm]{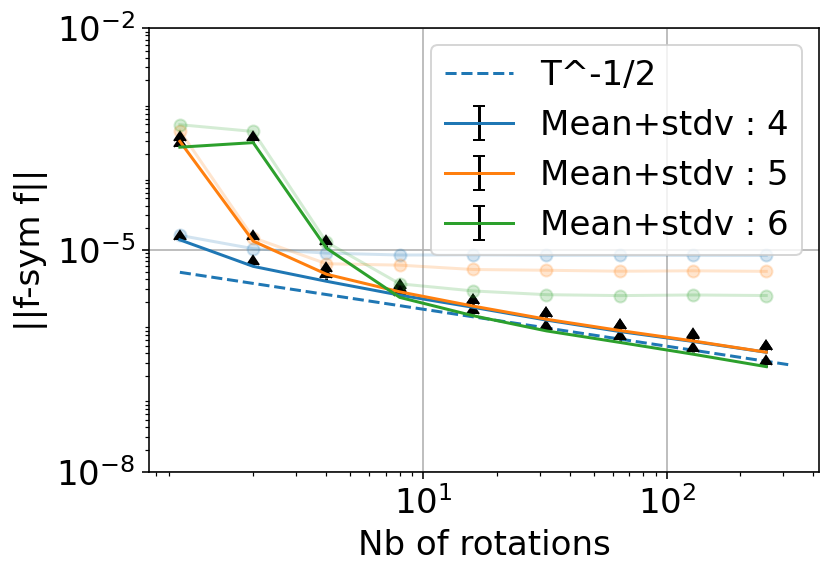} 
        \caption{$UUU$}
        \label{fig:MCpsym3dD1}
    \end{subfigure}
    \begin{subfigure}{0.3\textwidth}
        \centering
        \includegraphics[height=3.7cm]{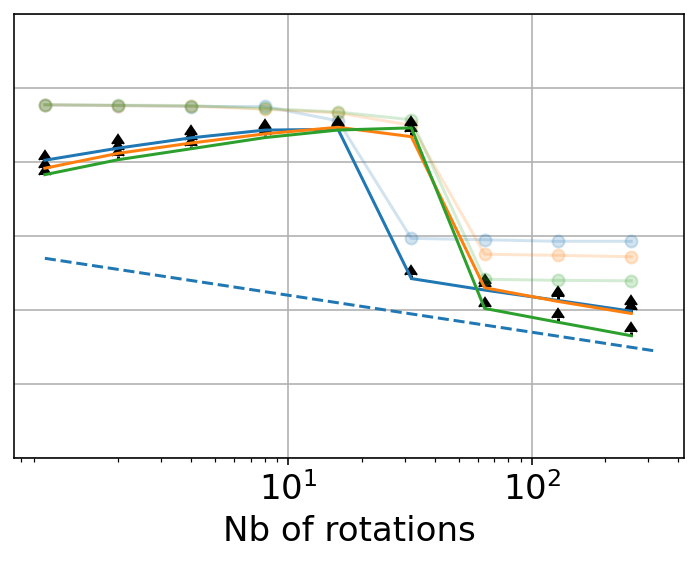} 
        \caption{$\delta UU$}
        \label{fig:MCpsym3dD2}
    \end{subfigure}
    \begin{subfigure}{0.3\textwidth}
        \centering
        \includegraphics[height=3.7cm]{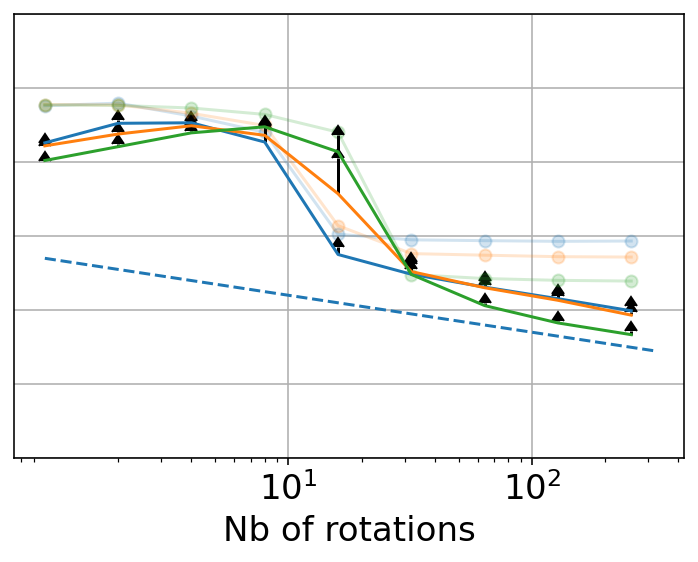} 
        \caption{$\delta^* UU$}
        \label{fig:MCpsym3dD3}
    \end{subfigure}
    \vspace{1em}
        \begin{subfigure}{0.35\textwidth}
        \centering
        \includegraphics[height=3.8cm]{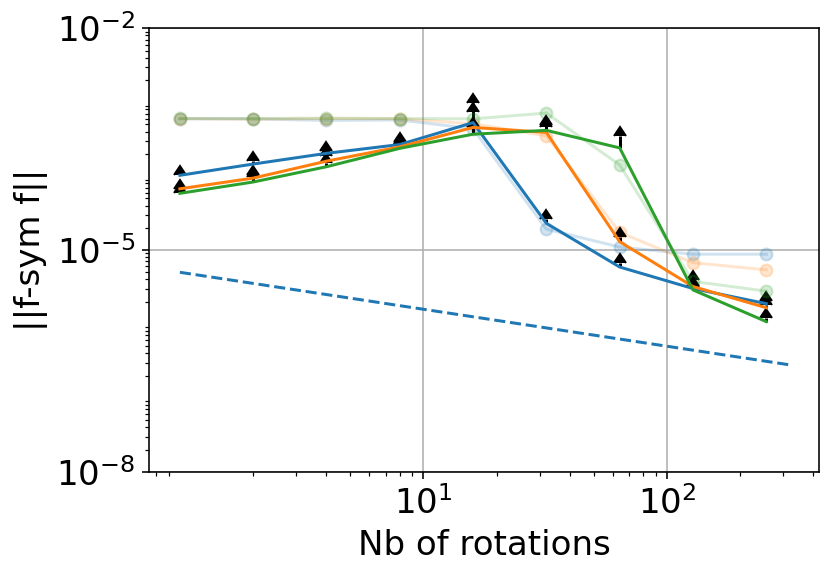} 
        \caption{$\delta H_1U$}
        \label{fig:MCpsym3dD4}
    \end{subfigure}
        \begin{subfigure}{0.3\textwidth}
        \centering
        \includegraphics[height=3.7cm]{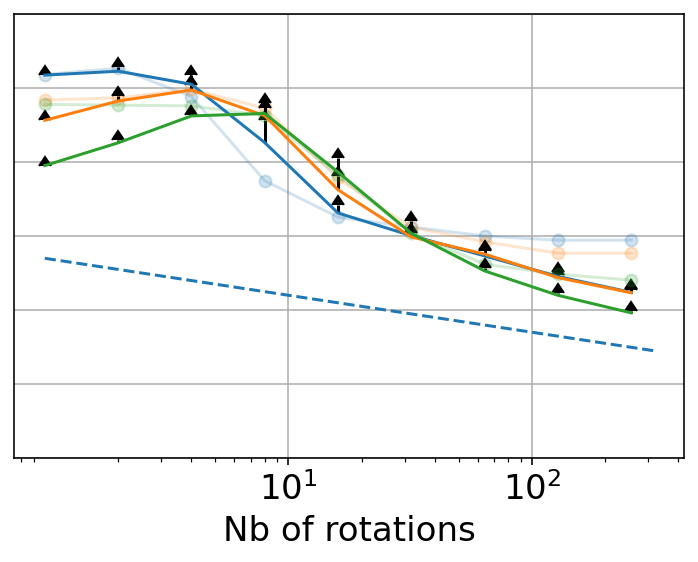} 
        \caption{$\delta^* H_1^*U$}
        \label{fig:MCpsym3dD5}
    \end{subfigure}

    \caption{Symmetrisation error evolution for random augmentations. We approximate a target of degree 11 and the training set size is 800 and the validation set size is 200. All schemes except for $UUU$ are regularised. All figures share the same y-axis. We run the 10 independent simulations before plotting their mean and standard deviation symmetrisation errors. The shaded lines correspond the approximation error of the first trial.}
    \label{fig:MCpsym3d}
\end{figure}

For comparison, Figure \ref{fig:MCpsym3d} shows the evolution of the symmetrisation error under a random augmentation \eqref{eq:lsqaug}. We observe the same behaviour as for $d=1$. The expected square root decay is observed as well as the drops in $\esym$ once enough rotations have been applied.

\begin{figure}[!htb]
    \centering
    \begin{subfigure}{0.35\textwidth}
        \centering
        \includegraphics[height=3.8cm]{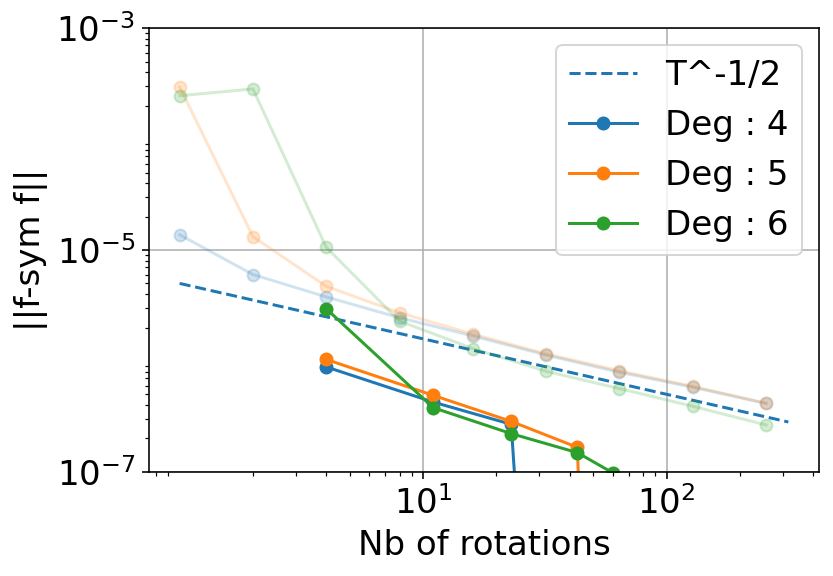} 
        \caption{$UUU$}
        \label{fig:compare3dD1}
    \end{subfigure}
    \begin{subfigure}{0.3\textwidth}
        \centering
        \includegraphics[height=3.7cm]{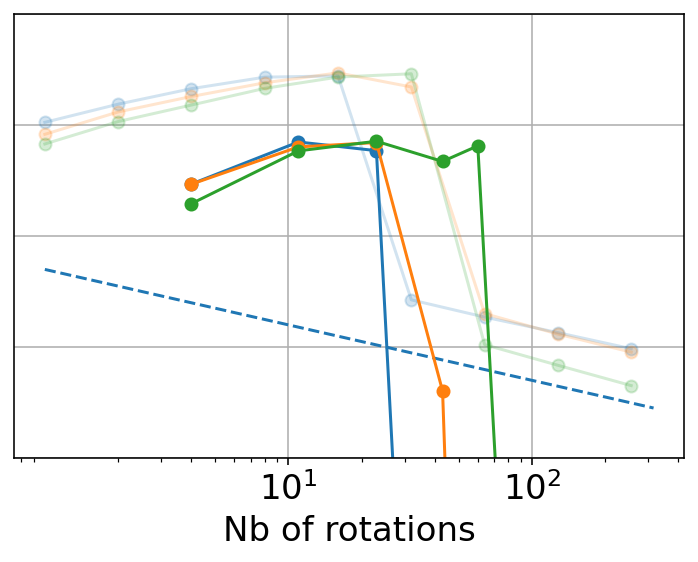} 
        \caption{$\delta UU$}
        \label{fig:compare3dD2}
    \end{subfigure}
    \begin{subfigure}{0.3\textwidth}
        \centering
        \includegraphics[height=3.7cm]{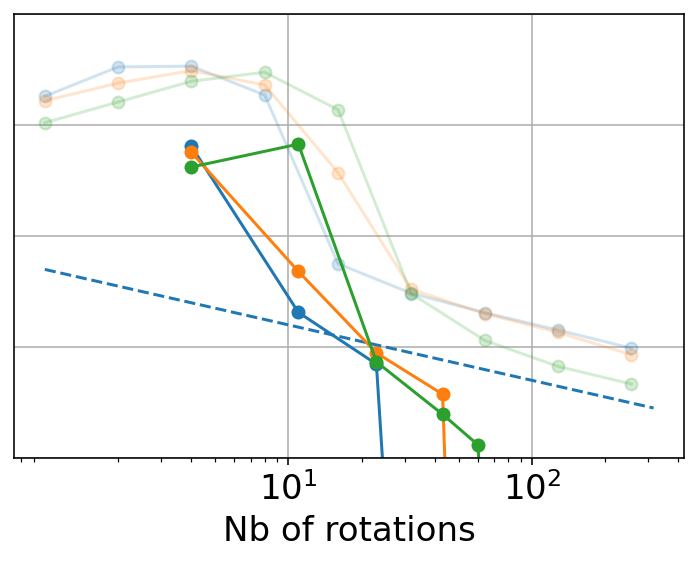} 
        \caption{$\delta^* UU$}
        \label{fig:compare3dD3}
    \end{subfigure}
    \vspace{1em}
        \begin{subfigure}{0.35\textwidth}
        \centering
        \includegraphics[height=3.8cm]{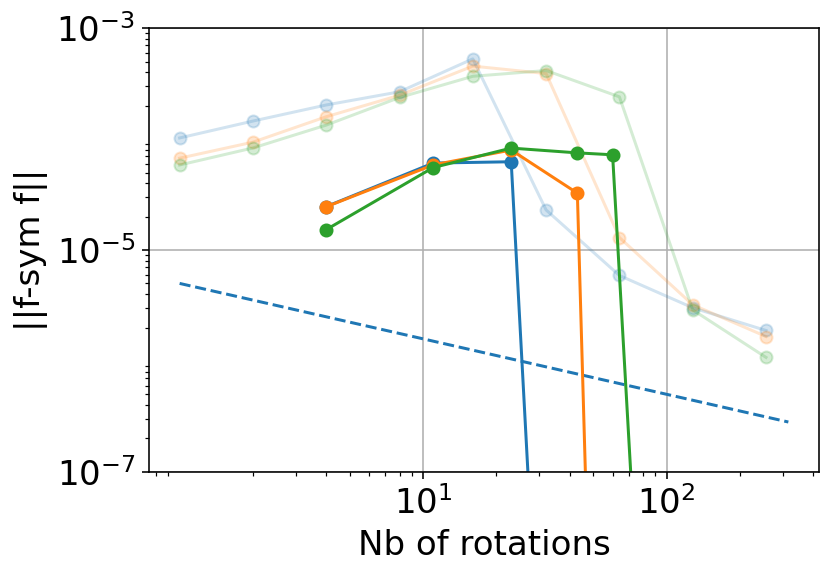} 
        \caption{$\delta H_1U$}
        \label{fig:compare3dD4}
    \end{subfigure}
        \begin{subfigure}{0.3\textwidth}
        \centering
        \includegraphics[height=3.7cm]{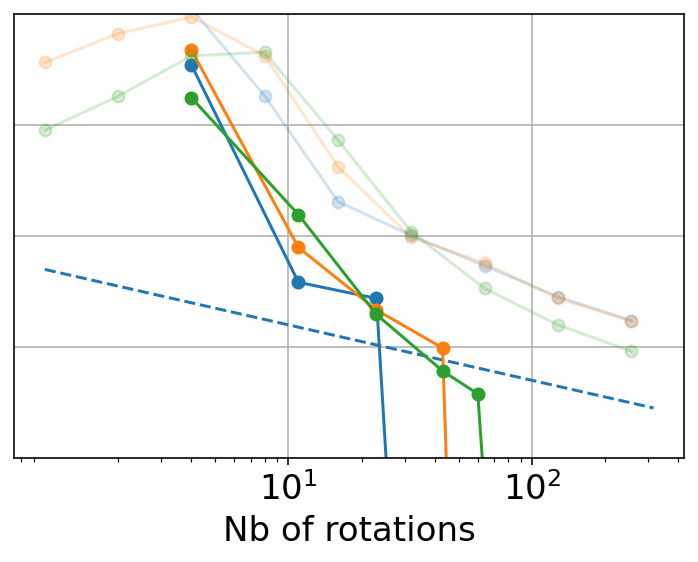} 
        \caption{$\delta^* H_1^*U$}
        \label{fig:compare3dD5}
    \end{subfigure}

    \caption{The superposition of Figures \ref{fig:MCpsym3d} and \ref{fig:8}. The shaded plots correspond to random augmentation while the solid ones correspond to quadrature augmentation. All plots share the same y-axis.}
    \label{fig:compare3d}
\end{figure}

Figure \ref{fig:compare3d} compares the symmetrisation error $\esym$ of both random and quadrature augmentation \eqref{eq:lsqaug}. Just as in Figure \ref{fig:compare2d} we observe that empirically quadrature augmentation is an order of magnitude better than random augmentations. Additionally the square root behaviour can be seen in Figure \ref{fig:compare3dD1}.

\newpage

\section{Conclusion}
We considered the approximation on functions invariant under a group action, in a data-driven setting. We analyzed two methods for augmenting a dataset and least squares system in order to improve the symmetry properties of the resulting approximant: a deterministic quadrature based approach and a probabilistic approach. For both cases we provided rigorous convergence rates. The key take-away is that, when the original data distribution is highly concentrated relative to the symmetry group, then the quadrature approach can provide exact symmetrization but only at the cost of a significant enlargement of the least squares system. In the pre-asymptotic regime it provides no discernable improvement over the probabilistic approach, which unsurprisingly converges at the usual $T^{-1/2}$ rate with $T$ begin the number of augmentations (though the explanation for that rate was not immediately obvious). Neither of our approaches make data augmentation both accurate and computationally cheap at the same time.

Following our arguments presented in Section \ref{section:AngMom}, when conservation properties are important in a downstream simulation task, then preserving symmetry as tightly as possible can be crucial and in those situations exact symmetrization, whether by enforcing it in the architecture, or through the proposed quadrature approach, is advisable.

We conclude with the caveat, that the present work is restricted to polynomial models and that the extension of our observations to nonlinear and deep learning methods are not obvious: further theory and empirical tests are required but empirical evidence suggests that quadrature augmentation has potential in that setting as well as it performs an order of magnitude better than random augmentations in our numerical tests.

\section*{Acknowledgements}
We are grateful to stimulating discussions with Timon Gutleb and Isaac Holt at early stages of this research project. The authors were supported by the Natural Sciences and Engineering Research Council of Canada through NSERC Discovery grants and an NSERC-NSF Collaboration on quantum science and artificial intelligence grant. The research was also supported through computational resources and services provided by the Digital Research Alliance of Canada (alliancecan.ca) and by Advanced Research Computing at the University of British Columbia. Christoph Ortner is a partner in Symmetric Group LLP which licenses force fields commercially.

\section{Appendix}
\label{Appendix}
In this appendix we discuss the observed oddity in Figure \ref{fig:Augpsym2d}. We ran simulations, sampling form the uniform distribution, for target functions of different regularity. We simulate targets with coefficients that asymptotically behave like $ K^{-1/2}, K^{-1}, K^{-2}, K^{-3}$, which corresponds to $\frac{1}{2}$-Hölder, $C^1$ and $C^2$ and $C^3$ functions.

\begin{figure}
    \centering
    \begin{subfigure}{0.4\textwidth}
    \centering
    \includegraphics[width=\textwidth]{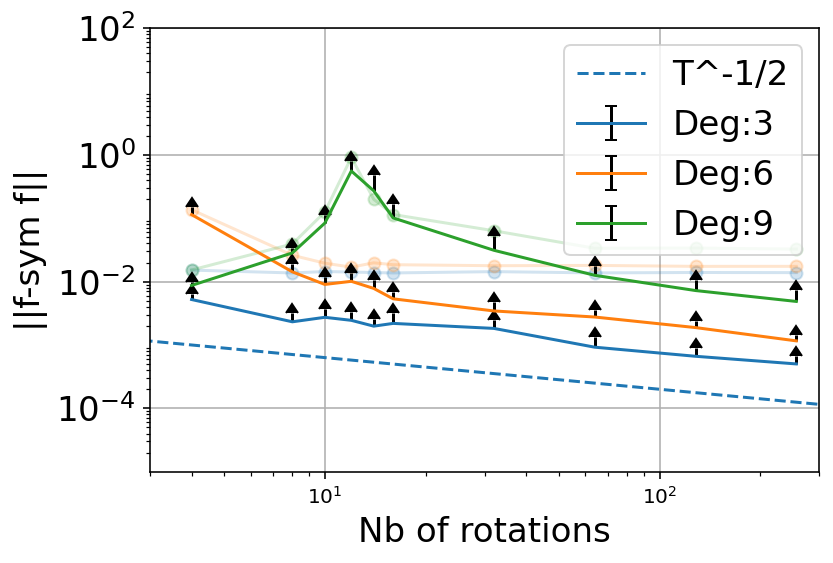}
    \caption{$1/2$-Hölder}
    \label{fig:A10.5H}
    \end{subfigure}
    \begin{subfigure}{0.4\textwidth}
    \centering
    \includegraphics[width=\textwidth]{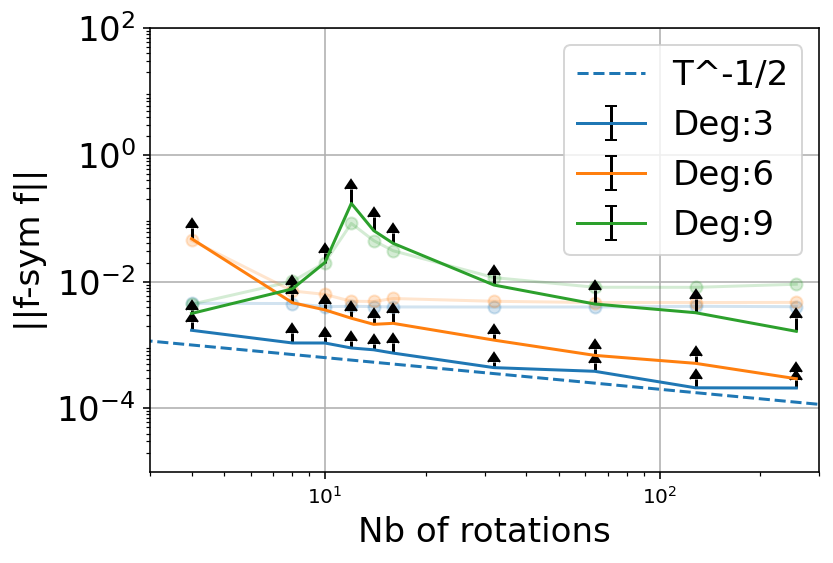}
    \caption{$C^1$}
    \label{fig:A1C1}
    \end{subfigure}
    \vspace{1em}
    \begin{subfigure}{0.4\textwidth}
    \centering
    \includegraphics[width=\textwidth]{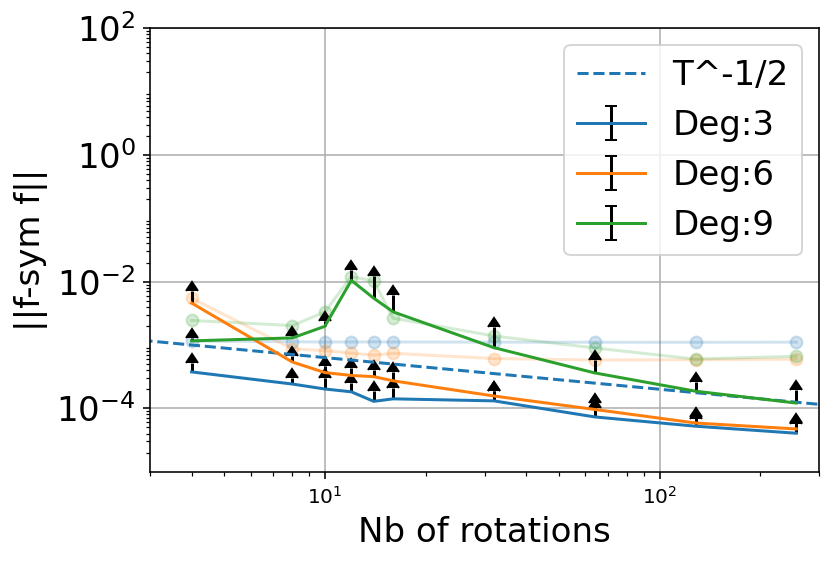}
    \caption{$C^2$}
    \label{fig:A1C2}
    \end{subfigure}
    \begin{subfigure}{0.4\textwidth}
    \centering
    \includegraphics[width=\textwidth]{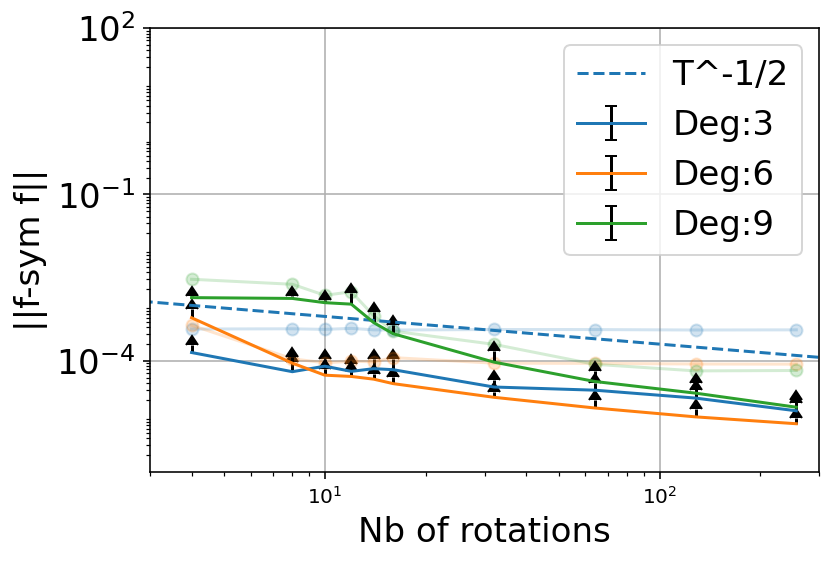}
    \caption{$C^3$}
    \label{fig:A1C3}
    \end{subfigure}
    \caption{$\varepsilon_{\mathrm{psym}}$ for varying regularity in the target function. The unaugmented training set contained 100 points, the validation set size is 100 and the target polynomial is of constant degree 30.}
    \label{fig:A1}
\end{figure}
From Figure \ref{fig:A1} we observe the effects of the upper bound \eqref{eq:SchurUB}. Indeed, we can see that for less regular target functions, our aforementioned intuition was correct, it is easier to symmetrise a low degree approximant. But then, as the regularity increases, the higher degree approximants catch up and eventually overtake the lower degree ones. Showing that regularity makes symmetrisation easier.

\newpage

\bibliographystyle{unsrtnat}
\bibliography{ref.bib}   
\end{document}